\newcommand{\twobytwo}[4]{\left. \begin{array}{cc} #1 & #2 \\ #3 & #4 \end{array} \right.}
\newcommand{\statespace}[4]{\left[
\begin{tabular}{c|c}
$#1$ & $#2$ \\ \hline
$#3$ & $#4$
\end{tabular}
\right]
}
\newcommand{\dist}[2]{\text{dist}\left( #1 , #2 \right)}
\newcommand{\RLinf}{\mathcal{RL}_\infty}
\newcommand{\stab}{\mathcal{RH}_\infty}
\newcommand{\Hinf}{\mathcal{H}_\infty}
\newcommand{\T}[1]{\mathcal{T}_{#1}}
\newcommand{\Th}[1]{\hat{\mathcal{T}}_{#1}}
\def\QED{~\rule[-1pt]{5pt}{5pt}\par\medskip}
\newtheorem{defn}{Definition}
\newtheorem{rem}{Remark}
\newtheorem{lem}{Lemma}
\newtheorem{prop}{Proposition}
\newtheorem{thm}{Theorem}
\newtheorem{problem}{Problem}
\newtheorem{example}{Example}
\begin{document}

\title{Distributed Control Subject to Delays Satisfying an $\mathcal{H}_\infty$ Norm Bound}

\author{Nikolai Matni
\thanks{N. Matni is with the Department of Control and Dynamical Systems, California Institute of Technology, Pasadena, CA.
 \tt{\small nmatni@caltech.edu}.}
\thanks{This research was in part supported by NSF NetSE, AFOSR, the Institute for Collaborative Biotechnologies through grant W911NF-09-0001 from the U.S. Army Research Office, and from MURIs ``Scalable, Data-Driven, and Provably-Correct Analysis of Networks'' (ONR) and ``Tools for the Analysis and Design of Complex Multi-Scale Networks'' (ARO).  The  content does not necessarily reflect the position or the policy of the Government, and no official endorsement should be inferred.}}
\maketitle
\begin{abstract}
This paper presents a characterization of distributed controllers subject to delay constraints induced by a strongly connected communication graph that achieve a prescribed closed loop $\mathcal{H}_\infty$ norm.  Inspired by the solution to the $\mathcal{H}_2$ problem subject to delays, we exploit the fact that the communication graph is strongly connected to decompose the controller into a local finite impulse response component and a global but delayed infinite impulse response component.  This allows us to reduce the control synthesis problem to a linear matrix inequality feasibility test.
\end{abstract}

\section{Introduction}

The identification of Quadratic Invariance (QI) \cite{RL06} as an appropriate condition for the convexification of structured model matching problems has brought a renewed enthusiasm and excitement to optimal controller synthesis.  In the following discussion, we survey recent results in this area, and in particular comment on three classes of quadratically invariant constraints: (1) sparsity constraints, in which we assume no delay in information sharing, but rather a restriction of what measurements each controller has access to, (2) delay constraints, in which we assume that controllers communicate with each other subject to delays induced by a strongly connected communication graph, and hence eventually have access to global, but delayed, information, and (3) delay-sparsity constraints, in which we allow both restrictions on measurement access and communication delay between controllers.

In the $\mathcal{H}_2$ case, explicit state-space solutions exist for delay-constrained \cite{LD12}, sparsity constrained \cite{SL10, SP10} and delay-sparsity constrained \cite{LL13} state-feedback problems.  When moving to the output feedback case, specific sparsity constrained problems have been solved explicitly, such as the state-space solution for the two-player case \cite{LL11}.  The delay-sparsity-constrained case has earned considerable attention, with solutions via vectorization \cite{RL06} and semi-definite programming \cite{R06,G06} existing -- we note that although computationally tractable, in contrast with the sparsity constrained setting, none of these methods claim to yield a controller of minimal order.  In the case of delay constraints without sparsity, the aforementioned results are applicable, but an additional method based on quadratic programming and spectral factorization \cite{LD14} also exists. 

The landscape of distributed $\mathcal{H}_\infty$ controller synthesis is comparably much sparser, so to speak.  However, especially in the sparsity constrained case, there has recently been some progress.  In particular, \cite{S13} provides a semi-definite programming solution for the structured optimal $\mathcal{H}_\infty$ output-feedback problem subject to nested sparsity constraints.  A more general approach, applicable to all three classes of constraint types, is presented in \cite{AR13}.  It allows for a principled approximation of the problem via a semi-definite programming based solution that computes an optimal $\mathcal{H}_\infty$ controller within a fixed finite-dimensional subspace.  By allowing this finite impulse response (FIR) approximation to be of large enough order, they are able to achieve near optimal performance in a computationally tractable manner.

This paper aims to provide a solution to the sub-optimal distributed $\mathcal{H}_\infty$ control problem subject to delay constraints -- in particular, we seek a delay constrained controller that achieves a prescribed closed loop norm.  Inspired by the results in \cite{LD14}, we exploit the fact that the controller can be written as a direct sum of a local FIR filter and a delayed, but global, infinite impulse response (IIR) element, and show that the synthesis problem can be reduced to a linear matrix inequality (LMI) feasibility test. 

A caveat is that our method is based on the so-called ``1984'' approach to $\mathcal{H}_\infty$ control, and as such, suffers from the same computational burden that the centralized solution is subject to.  We do not claim that our solution is computationally scalable, but provide it rather as evidence that in the case of delay constrained $\mathcal{H}_\infty$ synthesis, the problem admits a finite-dimensional formulation.  Our hope is that this result, much as was the case for its centralized analogue, will be a stepping stone to more computationally scalable and explicit results.

This article is organized as follows: Section \ref{sec:problem} establishes notation, and formalizes the distributed $\mathcal{H}_\infty$ model matching problem subject to delay constraints.  In Section \ref{sec:review}, we provide a refresher on the ``1984'' solution to the $\mathcal{H}_\infty$ problem, as described in \cite{F87}.  Section \ref{sec:new} provides the main result of the paper, and we demonstrate our algorithm on a three-player chain example in Section \ref{sec:example}.  We end with a discussion and conclusions in Section \ref{sec:conc}, and the Appendix contains useful formulae for computing the  transfer matrix factorizations and approximations required by our method.

\section{Problem Formulation}
In all of the following, we work in discrete-time.
\label{sec:problem}
\subsection{Notation and Operator Theoretic Preliminaries}
\label{sec:notation}
Here we establish notation and remind the reader of some standard results from operator theory, taken from \cite{F87}.
\begin{itemize}
\item $\mathcal{H}_2$ denotes the set of stable proper transfer matrices that are norm square integrable on the unit circle with vanishing negative Fourier coefficients; i.e. if $G \in \mathcal{H}_2$ then $H(z) = \sum_{i=0}^\infty H_i z^{-i}$ and $\|H\|_2^2 = \text{trace}\left(\sum_{i=0}^\infty H_i^*H_i\right)$.
\item $\mathcal{H}_\infty$ denotes the set of stable proper transfer matrices.  Note that $G \in \mathcal{H}_\infty$ implies $G \in \mathcal{H}_2$.
\item $\mathcal{L}_\infty$ denotes the frequency domain Lesbesgue space of essentially bounded functions.
\item The prefix $\mathcal{R}$ to a set $\mathcal{X}$ indicates the restriction to real-rational members of $\mathcal{X}$.
\item $\| \cdot \|_\infty$ denotes the norm on $\mathcal{L}_\infty$.
\item For $R\in \mathcal{L}_\infty$, let $\dist{R}{\Hinf}:= \inf\{\|R-X\|_\infty \, : \, X \in \Hinf\}$.
\item $\| \cdot \|$ denotes the spectral norm (maximum singular value).
\item For a transfer matrix $G\in\RLinf$, $G^\sim$ denotes its conjugate, i.e. $G^\sim(z)=G^*(z^{-1})$.
\item $\oplus$, and $\perp$, denote the direct sum, and orthogonality, respectively, as defined with respect to the standard inner product on $\mathcal{H}_2$.
\item Decompose $R\in \RLinf$ as $R= R_1^\sim + R_2$, with $R_1, \, R_2 \in \stab$, and $R_1$ strictly proper.  We shall refer to $(R_1,R_2)$ as an anti-stable/stable decomposition of $R$.
\item $\Gamma_F$ denotes the Hankel operator with symbol $F$, that is to say the Hankel mapping from $\mathcal{H}_2$ to $\mathcal{H}_2^\perp$.  Note that if $(F_1,F_2)$ is an anti-stable/stable decomposition of $F$, then $\Gamma_F = \Gamma_{F_1^\sim}$.
\item $\tilde{\Gamma}_F$ denotes the adjoint Hankel operator with symbol $F$, that is to say the Hankel mapping from $\mathcal{H}_2^\perp$ to $\mathcal{H}_2$.  The following useful fact then holds: 
\begin{equation}\|\Gamma_F\| = \|{\Gamma}_{F_1^\sim}\|=\|\tilde{\Gamma}_{F_1}\|. \label{eq:hankels} \end{equation}
\item $\Delta_N$ denotes the N-delay operator, i.e. $\Delta_N G = \frac{1}{z^N}G$.
\end{itemize}

\subsection{The model-matching problem subject to delay}
We provide a brief overview of the distributed optimal control problem subject to delay, and refer the reader to \cite{LD14} for a much more thorough and general exposition.

Let $P$ be a stable discrete-time plant given by
\begin{equation}
P=\left[\begin{array}{c|cc}
A & B_{1} & B_{2}\\
\hline C_{1} & 0 & D_{12}\\
C_{2} & D_{21} & 0
\end{array}\right]=\begin{bmatrix}P_{11} & P_{12}\\
P_{21} & P_{22}
\end{bmatrix}
\label{eqn:plant}
\end{equation}
with inputs of dimension $p_{1},\, p_{2}$ and outputs of dimension
$q_{1},\, q_{2}$. We restrict attention to stable plants for simplicity.
These methods could also be applied to an unstable plant if a stable
stabilizing nominal controller can be found, as in \cite{RL06}.  Future work will look to incorporate the results in \cite{LD14}, which are based on those in \cite{SM11}, into our procedure so as to have a general solution to the model matching problem.

Throughout, we assume that
\begin{itemize}
\item $D_{12}^{T}D_{12}>0,$ 
\item $D_{21}D_{21}^{T}>$0, 
\item $C_{1}^{T}D_{12}=0$
\item $B_{1}D_{21}^{T}=0$
\end{itemize}
so as to ensure the existence of stabilizing solutions to the necessary discrete algebraic Riccati equations (DAREs).

For $N\geq1$, define the space of $\stab$ FIR transfer matrices by $\mathcal{X}_N=\oplus_{i=0}^{N-1}\frac{1}{z^{i}}\mathbb{C}^{p_{2}\times q_{2}}$. Note that in the following, we sometimes suppress the subscript and write $\mathcal{X}_N=\mathcal{X}$ when $N$ is clear from context.

In this paper, we are concerned with controller constraints described
by delay patterns that are imposed by \emph{strongly connected communication
graphs}. As such, let  $\mathcal{S}\subset \stab$
be a subspace of the form
\begin{equation}
\mathcal{S=\mathcal{Y}}\oplus\Delta_N\stab \label{eq:constraint}
\end{equation}
where 
\begin{equation}
\mathcal{Y}=\oplus_{i=0}^{N-1}\frac{1}{z^{i}}\mathcal{Y}_{i}\subset\oplus_{i=0}^{N-1}\frac{1}{z^{i}}\mathbb{R}^{p_{2}\times q_{2}}\subset\mathcal{X}_N.
\end{equation}  

Specifically, this implies that every decision-making agent has access to \emph{all} measurements that are at least $N$ time-steps old. 

We can therefore partition the measured outputs $y$ and control inputs $u$
according to the dimension of the subsystems:
\[
y=[\begin{array}{ccc}
y_{1}^{T} & \cdots & y_{m}^{T}]^{T}\ \ u=[\begin{array}{ccc}
u_{1}^{T} & \cdots & u_{n}^{T}]^{T}\end{array}\end{array}
\]
and then further partition each constraint set $\mathcal{Y}_{i}$
as
\begin{equation}
\mathcal{Y}_{i}=\begin{bmatrix}\mathcal{Y}_{i}^{11} & \cdots & \mathcal{Y}_{i}^{1m}\\
\vdots & \ddots & \vdots\\
\mathcal{Y}_{i}^{n1} & \cdots & \mathcal{Y}_{i}^{nm}
\end{bmatrix},\label{eq:partition}
\end{equation}
where 
\begin{equation}
\mathcal{Y}_{i}^{jk}= \begin{cases} \mathbb{R}^{p_{2}^{j}\times q_{2}^{k}} &\mathrm{if} \ u_j \ \mathrm{has \ access \ to} \ y_k \ \mathrm{at \ time} \ i\\
0 & \mathrm{otherwise} 
\end{cases} 
\end{equation}
and $\sum_{j=1}^n p_2^j=p_2$, $\sum_{k=1}^m q_2^k=m$. 
\begin{figure}
\centerline{\includegraphics{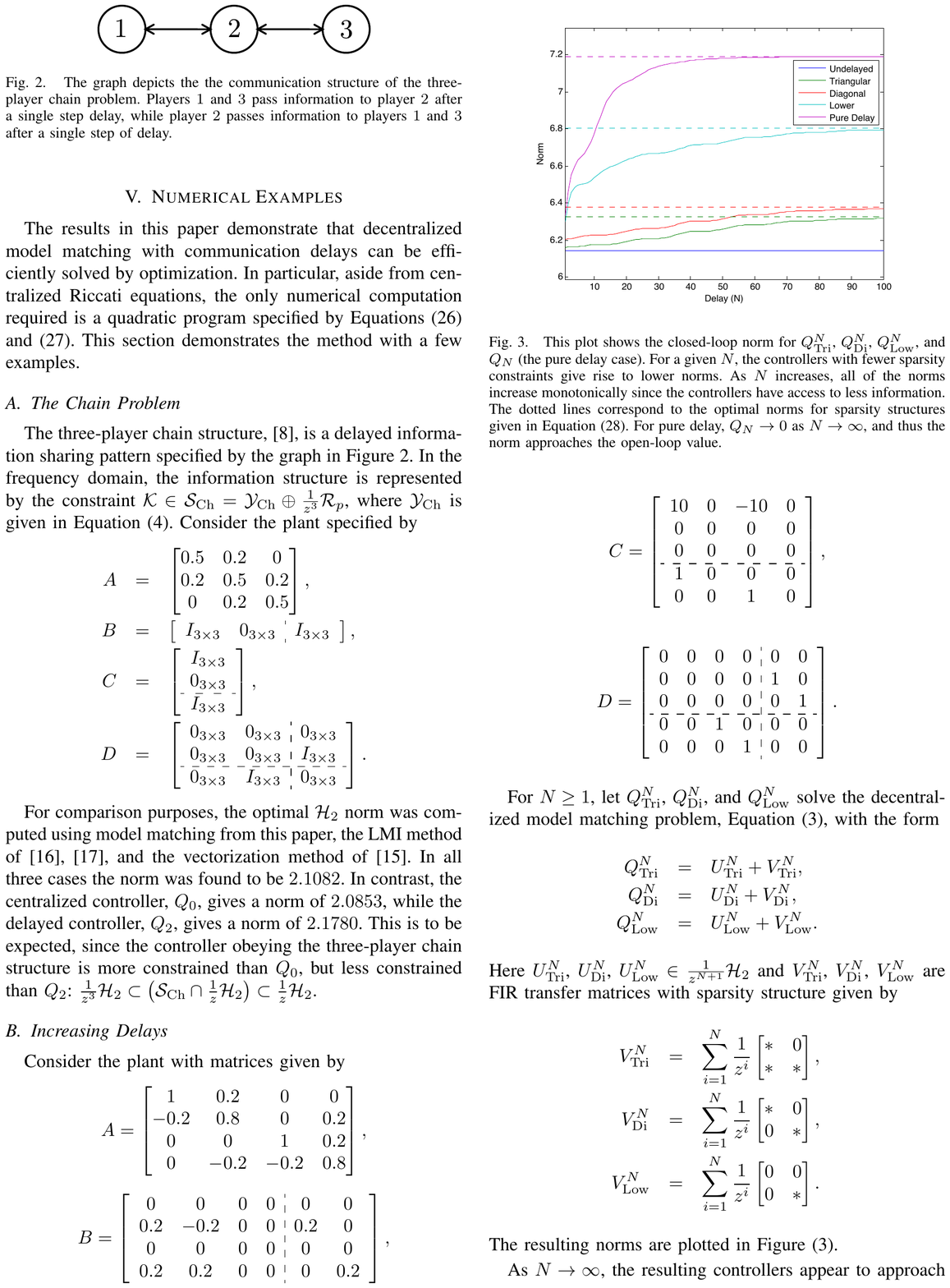}}
\caption{\footnotesize The graph depicts the communication structure of the three-player chain problem. Edge weights (not shown) indicate the delay required to transmit information between nodes.}
\label{fig:chain}
\end{figure}

\begin{example}
Consider the three player chain problem as illustrated in Figure \ref{fig:chain},
with communication delay $\tau_{c}$ between nodes. Then
\begin{equation}\begin{array}{rcl}
\mathcal{S} & = & \begin{bmatrix} \stab & \frac{1}{z^{\tau_{c}}}\stab & \frac{1}{z^{2\tau_{c}}}\stab\\
\frac{1}{z^{\tau_{c}}}\stab & \stab & \frac{1}{z^{\tau_{c}}}\stab\\
\frac{1}{z^{2\tau_{c}}}\stab & \frac{1}{z^{\tau_{c}}}\stab & \stab
\end{bmatrix}\label{eq:exS}\\
 & = & \oplus_{i=0}^{2\tau_{c}-1}\frac{1}{z^{i}}\mathcal{Y}_{i}\oplus\Delta_{2\tau_{c}}\stab
\end{array}\end{equation}
with
\begin{equation}
\mathcal{Y}_{i}=\begin{cases}
\begin{bmatrix}* & 0 & 0\\
0 & * & 0\\
0 & 0 & *
\end{bmatrix} & \mathrm{for\ }0\leq i<\tau_{c}\\
\begin{bmatrix}* & * & 0\\
* & * & *\\
0 & * & *
\end{bmatrix} & \mathrm{for\ }\tau_{c}\leq i < 2\tau_{c}
\end{cases},
\label{eq:3chain}
\end{equation}
where, for compactness, {*} is used to denote a space of appropriately
sized real matrices.  In this setting, every decision maker then has access to all measurements that are at least $2\tau_c$ time-steps old.

\end{example}

The distributed control problem of interest is to design a controller
$K\in\mathcal{S}$ so as to achieve a pre-defined closed loop $\mathcal{H}_{\infty}$
norm.  Specifically, the problem is to find an internally stabilizing $K \in \mathcal{S}$ such that
\begin{equation}
||P_{11}+P_{12}K(I-P_{22}K)^{-1}P_{21}||_{\infty} \leq \gamma
\label{eq:dec_problem}
\end{equation}
for some pre-defined $\gamma>\gamma_{\inf}$, where $\gamma_{\inf}$ is the infimum achievable closed loop $\Hinf$ norm.

In \cite{RL06}, it was shown that to be able to pass to the Youla parameter
$Q=K(I-P_{22}K)^{-1}$ without loss in (\ref{eq:dec_problem}), the constraint
set must be  \emph{quadratically invariant}.
\begin{defn}
A set $\mathcal{S}$ is \emph{quadratically invariant} under
$P_{22}$ if
\[
KP_{22}K\in\mathcal{S\ \mathrm{for\ all\ }}K\in\mathcal{S}
\]
\end{defn}
In the case of delay-constraints imposed by a communication graph, intuitive and easily verifiable conditions for QI can be stated \cite{RCL10}.  Essentially these conditions say that in order to have QI, controllers must be able to communicate with each other faster than dynamics propagate through the plant -- this is closely related to partial nestedness \cite{HC72} and poset causality \cite{SP10}.
 
Thus, if QI holds, the feasibility problem (\ref{eq:dec_problem})
can be reduced to the following model matching problem:
\begin{problem}
Find $Q\in\mathcal{S\,\bigcap}\stab$ such that
\begin{equation}
 ||\T{1}+\T{2}Q\T{3}||_{\infty} \leq \gamma
\end{equation}
for some $\gamma > \gamma_{\inf}$, with $\T{1}=P_{11}$, $\T{2} = P_{12}$ and $\T{3}=P_{21}$.
\end{problem}

\section{A Review of ``1984'' $\mathcal{H}_\infty$ Control}
\label{sec:review}
As our solution is based on the so-called ``1984'' approach to $\mathcal{H}_\infty$ control, we review it in this section.  The following is based on material found in \cite{F87}.

\subsection{$\T{3} = I$ Case}
\label{sec:identity}
 We begin with the solution to the sub-optimal model matching problem with $\T{3}=I$ first, as the general case follows from a nearly identical derivation.  Specifically, we consider the problem:
\begin{problem}
Find $Q\in\stab$ such that $\|\T{1}-\T{2}Q\|_\infty\leq \gamma$ for some $\gamma>\gamma_{\inf}\geq 0$, where $\gamma_{\inf}$ is the optimal achievable closed-loop $\Hinf$ norm.
\end{problem}

In order to state the main result, we first define the following transfer matrices:
\begin{enumerate}
\item Let $U_i, \, U_o$ be an inner-outer factorization of $\T{2}$ such that $\T{2}=U_iU_o$, with $U_i^\sim U_i=I$, and $U_i,\, U_o,\, U_o^{-1} \in \stab$.
\item Let $Y:=(I-U_iU_i^\sim)\T{1}$.
\item For $\gamma>\|Y\|_\infty$, let $Y_o$ be a bi-stable spectral factor of $\gamma^2I - Y^\sim Y$ such that $\gamma^2I - Y^\sim Y=Y_o^\sim Y_o$, with $Y_o, \, Y_o^{-1} \in \stab$.
\item Define the $\RLinf$ matrix $R:=U_i^\sim\T{1}Y_o^{-1}$.
\end{enumerate}

\begin{thm}
Let $\alpha := \inf \{ \|\T{1}-\T{2}Q\|_\infty \ : \ Q \in \stab \}$.

Then 
\begin{enumerate}
\item $\alpha = \inf\{ \gamma \ : \ \|Y\|_\infty < \gamma, \ \dist{R}{\stab}<1 \}$, and
\item For $\gamma > \alpha$ and $Q,\,X\in \stab$ such that
\begin{itemize}
\item $\|R-X\|_\infty\leq 1$, and
\item $X = U_oQY_o^{-1}$,
\end{itemize}
 we have that $\|\T{1}-\T{2}Q\|_\infty \leq \gamma$.
 \end{enumerate}
\label{thm:classic}
\end{thm}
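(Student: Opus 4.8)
The plan is to convert the model-matching objective $\|\T{1}-\T{2}Q\|_\infty$ into a pure best-approximation (distance-to-$\stab$) problem in the single variable $X$, for which $\dist{R}{\stab}<1$ is exactly the solvability condition. The engine is an exact completion-of-squares identity. Working pointwise on the unit circle, where $\|G\|_\infty\le\gamma$ is equivalent to the positive-semidefinite inequality $G^\sim G \preceq \gamma^2 I$, I would first substitute the inner-outer factorization $\T{2}=U_iU_o$ and use the splitting $I = U_iU_i^\sim + (I-U_iU_i^\sim)$ together with $U_i^\sim U_i=I$ to verify
\[
(\T{1}-\T{2}Q)^\sim(\T{1}-\T{2}Q) = Y^\sim Y + (U_i^\sim\T{1}-U_oQ)^\sim(U_i^\sim\T{1}-U_oQ).
\]
Since both summands are positive semidefinite, this already yields $\|\T{1}-\T{2}Q\|_\infty \ge \|Y\|_\infty$ for every $Q\in\stab$, hence $\alpha\ge\|Y\|_\infty$ (so that for $\gamma>\alpha$ the objects $Y_o,R$ are well defined), and it isolates the only $Q$-dependent term.

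Next I would turn the norm bound into the distance problem. For $\gamma>\|Y\|_\infty$ the symbol $\gamma^2I-Y^\sim Y$ is uniformly positive on the circle, so the bistable spectral factor $Y_o$ exists, and $\|\T{1}-\T{2}Q\|_\infty\le\gamma$ becomes, after subtracting $Y^\sim Y$ and applying the congruence $(\cdot)\mapsto Y_o^{-\sim}(\cdot)Y_o^{-1}$,
\[
\big[(U_i^\sim\T{1}-U_oQ)Y_o^{-1}\big]^\sim\big[(U_i^\sim\T{1}-U_oQ)Y_o^{-1}\big]\preceq I,
\]
i.e.\ $\|R-X\|_\infty\le1$ with $R=U_i^\sim\T{1}Y_o^{-1}$ and $X=U_oQY_o^{-1}$. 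Because $U_o,U_o^{-1},Y_o,Y_o^{-1}\in\stab$, the map $Q\mapsto X=U_oQY_o^{-1}$ is a bijection of $\stab$ onto itself (inverse $X\mapsto U_o^{-1}XY_o$), so for each fixed $\gamma>\|Y\|_\infty$ the bound $\|\T{1}-\T{2}Q\|_\infty\le\gamma$ is solvable in $\stab$ exactly when $\dist{R}{\stab}\le1$. Reading these equivalences forward from a given admissible pair is precisely claim (2): $\|R-X\|_\infty\le1$ and $X=U_oQY_o^{-1}$ feed back through the identity to give $(\T{1}-\T{2}Q)^\sim(\T{1}-\T{2}Q)\preceq Y^\sim Y + Y_o^\sim Y_o = \gamma^2 I$.

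For claim (1) I would prove two inequalities for $\gamma^\star:=\inf\{\gamma:\|Y\|_\infty<\gamma,\ \dist{R}{\stab}<1\}$, keeping in mind that $R=R(\gamma)$ varies with $\gamma$ through $Y_o$. For $\gamma>\alpha$, choose $Q\in\stab$ with $\gamma':=\|\T{1}-\T{2}Q\|_\infty<\gamma$; the identity gives $(U_i^\sim\T{1}-U_oQ)^\sim(\cdot)\preceq(\gamma')^2I-Y^\sim Y = Y_o^\sim Y_o - (\gamma^2-(\gamma')^2)I$, and since $Y_o\in\stab$ is bounded on the circle we have $Y_o^{-\sim}Y_o^{-1}\succeq\delta I$ for some $\delta>0$, so the slack forces $\|R-X\|_\infty<1$ strictly; thus $\gamma^\star\le\gamma$, and hence $\gamma^\star\le\alpha$. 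Conversely, for $\gamma>\gamma^\star$ pick $\gamma''\in(\gamma^\star,\gamma)$ with $\dist{R(\gamma'')}{\stab}<1$, take $X\in\stab$ with $\|R(\gamma'')-X\|_\infty\le1$, and set $Q=U_o^{-1}XY_o$; claim (2) yields $\|\T{1}-\T{2}Q\|_\infty\le\gamma''<\gamma$, so $\alpha\le\gamma^\star$.

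The main obstacle is not the algebra but the bookkeeping of strict versus non-strict inequalities together with the $\gamma$-dependence of $R$: the distance problem carries a strict bound $\dist{R}{\stab}<1$ whereas the norm-bound equivalence naturally produces $\le1$, and closing this gap in both directions is where the uniform positivity of $\gamma^2I-Y^\sim Y$ (equivalently, the boundedness of $Y_o$ on the circle) does the real work, converting a strict margin in $\gamma$ into a strict margin in the distance and conversely. I would also justify once that the supremum over the circle of the spectral norm may be exchanged with the pointwise semidefinite characterization, and that the assumed Riccati solvability supplies the bistable $Y_o$; both are standard.
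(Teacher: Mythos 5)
Your proposal is correct and takes essentially the same route as the paper: your completion-of-squares identity is exactly the content of the paper's Lemma~\ref{lem:helper1} (premultiplication by the norm-preserving operator built from $U_i^\sim$ and $I-U_iU_i^\sim$), your congruence by $Y_o^{-1}$ is its Lemma~\ref{lem:helper2}, and the bijection $Q \mapsto U_oQY_o^{-1}$ of $\stab$ onto itself closes the argument just as in the paper's proof. Your explicit handling of the strict-versus-nonstrict gap and of the $\gamma$-dependence of $R$ (picking a feasible $\gamma''<\gamma$ rather than assuming upward closure of the feasible set) is, if anything, slightly more careful than the paper's bookkeeping.
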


Before proving this result, we need the following two preliminary lemmas:

\begin{lem}
Let $U$ be inner and $E\in \RLinf$ be given by
\[ E := \begin{bmatrix} U^\sim \\  I - UU^\sim \end{bmatrix}. \]
Then for all $G\in\RLinf$, we have that $\|EG\|_\infty = \|G\|_\infty$.
\label{lem:helper1}
\end{lem}

\begin{lem}
For $F,\,G\in\RLinf$ with the same number of columns, if
\begin{equation} \left\| \begin{bmatrix} F \\ G \end{bmatrix} \right\|_\infty < \gamma \label{eq:helper2}\end{equation}
then $\|G\|_\infty < \gamma$ and $\|FG_o^{-1}\|_\infty <1$, where $G_o$ is a bi-stable spectral factor of $\gamma^2 I -G^\sim G$.

Conversely, if $\|G\|_\infty < \gamma$ and $\|FG_o^{-1}\|_\infty \leq 1$, then \eqref{eq:helper2} holds.
\label{lem:helper2}
\end{lem}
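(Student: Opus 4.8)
The plan is to collapse both directions of the lemma to a single frequency-by-frequency matrix inequality, using the fact that stacking columns adds the associated self-adjoint products. The key observation is that
\[
\begin{bmatrix} F \\ G \end{bmatrix}^\sim \begin{bmatrix} F \\ G \end{bmatrix} = F^\sim F + G^\sim G,
\]
so that, since the spectral norm of a positive semidefinite matrix is its largest eigenvalue,
\[
\left\| \begin{bmatrix} F \\ G \end{bmatrix} \right\|_\infty^2 = \left\| F^\sim F + G^\sim G \right\|_\infty .
\]
Because $F,\,G\in\RLinf$ are continuous on the (compact) unit circle, this supremum is attained, and hence \eqref{eq:helper2} is equivalent to the uniform pointwise bound $F^\sim F + G^\sim G < \gamma^2 I$ on the circle, with a strict frequency-independent gap.

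For the forward direction, I would first note that $F^\sim F \geq 0$ forces $G^\sim G < \gamma^2 I$, i.e.\ $\|G\|_\infty < \gamma$, which in turn makes $\gamma^2 I - G^\sim G$ uniformly positive definite and thus guarantees the claimed bi-stable spectral factor $G_o$ with $G_o,\,G_o^{-1}\in\stab$ and $G_o^\sim G_o = \gamma^2 I - G^\sim G$. Rearranging the hypothesis gives $F^\sim F < G_o^\sim G_o$, and a congruence by the (pointwise invertible) factor $G_o^{-1}$ preserves the strict ordering:
\[
(FG_o^{-1})^\sim (FG_o^{-1}) = (G_o^{-1})^\sim F^\sim F\, G_o^{-1} < (G_o^{-1})^\sim G_o^\sim G_o\, G_o^{-1} = I,
\]
so $\|FG_o^{-1}\|_\infty < 1$. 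The converse simply runs this chain backwards: from $\|FG_o^{-1}\|_\infty \leq 1$ we get $(FG_o^{-1})^\sim(FG_o^{-1}) \leq I$, congruence by $G_o$ yields $F^\sim F \leq \gamma^2 I - G^\sim G$, i.e.\ $F^\sim F + G^\sim G \leq \gamma^2 I$, and taking the supremum recovers the bound on the stacked norm.

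The main point requiring care is the bookkeeping of strict versus non-strict inequalities and their uniformity over the circle. Two facts must be handled cleanly: that congruence by an invertible $G_o^{\pm 1}$ preserves both $<$ and $\leq$, and that a strict pointwise inequality between continuous symbols on a compact set upgrades to a uniform gap (which is what makes the forward conclusion genuinely strict). I would also flag the mild asymmetry this creates: the forward direction delivers the strict bound $\|FG_o^{-1}\|_\infty < 1$, whereas the converse from the non-strict $\|FG_o^{-1}\|_\infty \leq 1$ naturally yields the non-strict $\left\|\,[\,F;\,G\,]\,\right\|_\infty \leq \gamma$, with strictness transferring only when the hypothesis itself is strict. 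This non-strict form is precisely what is invoked in Theorem \ref{thm:classic}, where $\|R-X\|_\infty \leq 1$ is used to certify $\|\T{1}-\T{2}Q\|_\infty \leq \gamma$.
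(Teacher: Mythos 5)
Your proof is correct, but note there is nothing in the paper to compare it against: Lemma \ref{lem:helper2} is stated without proof, imported from the ``1984'' literature \cite{F87}. Judged on its own, your argument is essentially the standard one from that literature: identify $\bigl[\begin{smallmatrix} F \\ G \end{smallmatrix}\bigr]^\sim\bigl[\begin{smallmatrix} F \\ G \end{smallmatrix}\bigr] = F^\sim F + G^\sim G$, reduce the stacked norm bound to the pointwise inequality $F^\sim F + G^\sim G < \gamma^2 I$ on the unit circle (using continuity of $\RLinf$ symbols and compactness of the circle to pass between pointwise and uniform bounds), and move between $F^\sim F < G_o^\sim G_o$ and $\|FG_o^{-1}\|_\infty < 1$ by congruence with $G_o^{\pm 1}$. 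The bookkeeping of strict versus non-strict inequalities, which you single out as the delicate point, is handled properly. Moreover, your closing remark is not merely stylistic but an actual correction to the paper: as literally stated, the converse of the lemma is false. Taking $G = 0$ and $F = \gamma I$ gives $G_o = \gamma I$, so $\|G\|_\infty < \gamma$ and $\|FG_o^{-1}\|_\infty = 1 \leq 1$, yet $\bigl\|\bigl[\begin{smallmatrix} F \\ G \end{smallmatrix}\bigr]\bigr\|_\infty = \gamma$, not $< \gamma$; the conclusion of the converse must be weakened to $\leq \gamma$, which is exactly the non-strict form the paper itself invokes at \eqref{eq:cp7} in the proof of Theorem \ref{thm:classic}.
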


\begin{lem}[{Nehari's Theorem}]
For any $R\in\RLinf$, we have that \[ \dist{R}{\stab}=\dist{R}{\mathcal{H}_\infty}=\|\Gamma_R\|, \] and that there exists $X\in\stab$ such that $\|R-X\|_\infty=\dist{R}{\stab}$.
\label{lem:nehari}
\end{lem}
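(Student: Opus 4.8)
The plan is to prove the chain of inequalities
\[ \|\Gamma_R\| \;\le\; \dist{R}{\Hinf} \;\le\; \dist{R}{\stab} \;\le\; \|\Gamma_R\|, \]
which collapses to the claimed equalities while producing an explicit minimizer along the way. The middle inequality is immediate, since $\stab\subset\Hinf$ means the infimum defining $\dist{R}{\Hinf}$ is taken over a larger set. For the leftmost inequality I would use that $\Gamma_R f = \Proj{\mathcal{H}_2^\perp}{Rf}$ together with the fact that subtracting a stable transfer matrix leaves the Hankel operator unchanged: for any $X\in\Hinf$, multiplication by $X$ maps $\mathcal{H}_2$ into itself, so $\Proj{\mathcal{H}_2^\perp}{Xf}=0$ and hence $\Gamma_{R-X}=\Gamma_R$. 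Since $\Gamma_{R-X}$ is a compression of the Laurent (multiplication) operator $M_{R-X}$, whose norm equals $\|R-X\|_\infty$, we obtain $\|\Gamma_R\|=\|\Gamma_{R-X}\|\le\|R-X\|_\infty$; taking the infimum over $X\in\Hinf$ gives the bound.

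The crux is the rightmost inequality $\dist{R}{\stab}\le\|\Gamma_R\|$, which I would prove constructively by exhibiting $X\in\stab$ with $\|R-X\|_\infty=\sigma:=\|\Gamma_R\|$. Because $R\in\RLinf$ is rational, its anti-stable/stable decomposition $R=R_1^\sim+R_2$ gives $\Gamma_R=\Gamma_{R_1^\sim}$, a finite-rank (hence compact) operator whose norm is attained; by \eqref{eq:hankels} this norm is computable from $R_1$. Let $(\xi,\eta)$ be a maximal Schmidt pair, $\xi\in\mathcal{H}_2$ and $\eta\in\mathcal{H}_2^\perp$ with $\Gamma_R\xi=\sigma\eta$ and $\tilde{\Gamma}_R\eta=\sigma\xi$. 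The candidate error $W:=R-X$ should be all-pass of modulus $\sigma$, pinned down by the interpolation condition $W\xi=\sigma\eta$ on the unit circle. I would define $X$ through this relation and verify (i) that $W^\sim W=\sigma^2 I$ on the circle, so $\|W\|_\infty=\sigma$, and (ii) that $X=R-W$ is stable. Working from a state-space realization of $R_1$, the Schmidt vectors are read off from the controllability and observability Gramians, and $X$ inherits an explicit rational formula (the one-step Nehari extension). Its rationality is precisely what certifies $X\in\stab$, not merely $X\in\Hinf$; this single construction therefore closes the inequality chain, establishes $\dist{R}{\stab}=\dist{R}{\Hinf}$, and provides the claimed optimal $X$.

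I expect the main obstacle to be part (ii) of the verification: confirming that the constructed extension is genuinely stable and that the error is exactly all-pass with norm $\sigma$. In the purely operator-theoretic route this is the content of the commutant lifting (or Parrott) theorem, which guarantees a norm-preserving extension of $\Gamma_R$ to a full operator but requires an additional intertwining argument to ensure the extension corresponds to multiplication by an $\Hinf$ symbol rather than an arbitrary bounded operator. Here I would instead exploit the finite-rank, rational structure of $\Gamma_R$ to bypass the general lifting machinery and settle both the all-pass identity and the stability of $X$ by direct manipulation of the Gramians.
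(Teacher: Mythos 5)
The paper never proves this lemma: it is Nehari's theorem, invoked as a known classical result from the ``1984'' $\Hinf$ literature (cf.\ \cite{F87}), so your proposal has to stand on its own rather than be compared to an in-paper argument. Its skeleton is the standard one, and the two easy inequalities are handled correctly: $\dist{R}{\Hinf}\le\dist{R}{\stab}$ because $\stab\subset\Hinf$, and $\|\Gamma_R\|\le\dist{R}{\Hinf}$ because $\Gamma_{R-X}=\Gamma_R$ for every $X\in\Hinf$ and a Hankel operator is a compression of the Laurent (multiplication) operator with the same symbol, whose norm is $\|R-X\|_\infty$.

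The genuine gap sits exactly where you flagged the ``main obstacle,'' namely $\dist{R}{\stab}\le\|\Gamma_R\|$ with attainment, and your sketch does not close it, for two concrete reasons. First, the interpolation condition $W\xi=\sigma\eta$ pins down $W$ only in the scalar case, where one can write $W=\sigma\eta/\xi$; for matrix-valued $R$ a single maximal Schmidt pair constrains $W$ only on a one-dimensional subspace at each frequency, so ``defining $X$ through this relation'' is not a definition, and for non-square $R$ the target identity $W^\sim W=\sigma^2 I$ can be outright impossible (it forces the error to have full column rank on the circle). Second, the Gramian manipulation you invoke to bypass commutant lifting is the \emph{suboptimal} Nehari extension---essentially the construction recorded in this paper's Appendix (from \cite{H93})---and it is only valid for $\gamma$ strictly larger than the Hankel norm: the formulas involve $N=(\gamma^2 I - PQ)^{-1}$ with $P,Q$ the controllability and observability Gramians, and at the optimal level $\gamma=\sigma=\|\Gamma_R\|$ the matrix $\sigma^2 I - PQ$ is singular, since $\sigma^2$ is the largest eigenvalue of $PQ$. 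So the explicit formulas you plan to ``read off'' degenerate precisely at the value of $\gamma$ the theorem is about. Completing the argument requires either the AAK/Glover optimal Hankel-norm approximation construction, with its rank-drop and degree-reduction bookkeeping, or an abstract existence argument (weak-$*$ compactness in $\Hinf$ to get attainment, plus a separate argument that an optimal approximant can be taken real-rational, i.e.\ in $\stab$ rather than merely $\Hinf$). Neither is supplied, and that missing step is the substance of Nehari's theorem.
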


We may now prove Theorem \ref{thm:classic}.

\begin{proof}

1) Let $\gamma_{\inf}:= \inf\{\gamma \ : \ \|Y\|_\infty < \gamma, \ \dist{R}{\stab}<1 \}$.

 Choose $\epsilon >0$ such that $\alpha < \gamma < \alpha + \epsilon$, implying that there exists $Q\in\stab$ such that $\|\T{1}-\T{2}Q\|_\infty < \gamma$.  Then, by Lemma \ref{lem:helper1}, we have that
 \begin{equation}
 \left\| \begin{bmatrix} U_i^\sim \\  I - U_iU_i^\sim \end{bmatrix}(\T{1}-\T{2}Q)\right\|_\infty < \gamma.
 \label{eq:cp1}
 \end{equation}
 Now, notice that 
  \begin{equation}
 \begin{bmatrix} U_i^\sim \\  I - U_iU_i^\sim \end{bmatrix}\T{2}=\begin{bmatrix} U_o \\ 0 \end{bmatrix},
 \label{eq:cp2}
 \end{equation}
making \eqref{eq:cp1} equivalent to
\begin{equation}
 \left\| \begin{bmatrix} U_i^\sim\T{1}-U_oQ \\  Y \end{bmatrix}\right\|_\infty < \gamma.
 \label{eq:cp3}
\end{equation}

Applying Lemma \ref{lem:helper2}, this then implies that
\begin{equation}
\|Y\|_\infty <\gamma,
\label{eq:cp4}
\end{equation}
and
\begin{equation}
\|U_i^\sim T_1 Y_o^{-1} - U_oQY_o^{-1}\|_\infty <1
\label{eq:cp5}
\end{equation}
By Lemma \ref{lem:nehari}, this in turn implies that $\dist{R}{U_o(\stab) Y_o^{-1}}<1$, 
which, noting that $U_o$ is right invertible in $\stab$ and that $Y_o$ is invertible in $\stab$, is equivalent to
\begin{equation}
\dist{R}{\stab}<1
\label{eq:cp6}
\end{equation}
Then, from \eqref{eq:cp4} and \eqref{eq:cp6}, and the definition of $\gamma_{\inf}$ we conclude that $\gamma_{\inf}\leq \gamma$, and thus that $\gamma < \alpha + \epsilon$.  Since $\epsilon$ was arbitrary, we then have that $\gamma_{\inf} \leq \alpha$.

To prove the reverse inequality, again choose $\epsilon>0$ and $\gamma$ such that $\gamma_{\inf} < \gamma < \gamma_{\inf} + \epsilon$.  Then \eqref{eq:cp4} and \eqref{eq:cp6} hold, so \eqref{eq:cp5} holds for some $Q\in\stab$.  Applying the converse of Lemma \ref{lem:helper2}, this in turn implies that
\begin{equation}
 \left\| \begin{bmatrix} U_i^\sim\T{1}-U_oQ \\  Y \end{bmatrix}\right\|_\infty \leq \gamma.
 \label{eq:cp7}
\end{equation}
Finally, reversing the above steps, this leads to $\|\T{1}-\T{2}Q\|_\infty \leq \gamma$.  Thus $\alpha \leq \gamma < \gamma_{\inf} + \epsilon$, and hence $\alpha \leq \gamma_{\inf}$.

2)  This follows immediately from the previous derivation.
\end{proof}

Thus, a high level outline for computing an $\Hinf$ controller satisfying a $\gamma$ bound in closed loop is
\begin{enumerate}
\item Compute $Y$ and $\|Y\|_\infty$.
\item Select a trial value $\gamma > \|Y\|_\infty$.
\item Compute $R$ and $\|\Gamma_R\|$.  Then $\|\Gamma_R\|<1$ if and only if $\alpha < \gamma$, so increase or decrease $\gamma$ accordingly, and return to step 2 until a sufficiently accurate upper bound for $\alpha$ is obtained.
\item Find a matrix $X\in\stab$ such that $\|R-X\|_\infty \leq 1$.
\item Solve $X=U_oQY_o^{-1}$ for a $Q \in \stab$ satisfying $\|\T{1}-\T{2}Q\|_\infty \leq \gamma$.
\end{enumerate}

\subsection{General $\T{3}$}
\label{sec:general}
We now state the result for general $\T{3}$.  First, define the following matrices
\begin{enumerate}
\item Let $U_i, \, U_o$ be an inner-outer factorization of $\T{2}$ such that $\T{2}=U_iU_o$, with $U_i^\sim U_i=I$, and $U_i,\, U_o,\, U_o^{-1} \in \stab$.
\item Let $Y:=(I-U_iU_i^\sim)\T{1}$.
\item For $\gamma>\|Y\|_\infty$, let $Y_o$ be a bi-stable spectral factor of $\gamma^2I - Y^\sim Y$ such that $\gamma^2I - Y^\sim Y=Y_o^\sim Y_o$, with $Y_o, \, Y_o^{-1} \in \stab$.
\item Let $V_{co},\ V_{ci}$ be a co-inner-outer factorization of $\T{3}Y_o^{-1}$ such that $\T{3}Y_o^{-1} = V_{co}V_{ci}$ and $V_{ci}, \, V_{co}, \, V_{co}^{-1} \in \stab$.
\item Let $Z:=U_i^\sim\T{1}Y_o^{-1}\left(I - V_{ci}^\sim V_{ci}\right).$
\item If $\|Z\|_\infty <1$, let $Z_{co}$ be a bi-stable co-spectral factor of $I - ZZ^\sim$ such that $I-ZZ^\sim = Z_{co}Z_{co}^\sim$, with $Z_{co}, \, Z_{co}^{-1} \in \stab.$
\item Let $R := Z_{co}^{-1}U_i^\sim \T{1} Y_o^{-1}V_{ci}^\sim.$
\end{enumerate}
\begin{rem}
Notice that $R,\, Y,\, Z \in \RLinf$, and that $Z_{co}^{-1}U_o$ is right-invertible over $\stab$ and $V_{co}$ is left-invertible over $\stab$.  
\end{rem}

\begin{thm}
Let $\alpha := \inf \{ \|\T{1}-\T{2}Q\T{3}\|_\infty  : Q \in \stab \}$.

Then 
\begin{enumerate}
\item $\alpha = \inf\{ \gamma \,:\,  \|Y\|_\infty < \gamma, \, \|Z\|_\infty <1,$  $\dist{R}{\stab}<1 \}$, and
\item For $\gamma > \alpha$ and $Q,\,X\in \stab$ such that
\begin{itemize}
\item $\|R-X\|_\infty\leq 1$, and
\item $X = Z_{co}^{-1}U_oQV_{co}$,
\end{itemize}
 we have that $\|\T{1}-\T{2}Q\T{3}\|_\infty \leq \gamma$.
 \end{enumerate}
\label{thm:classic2}
\end{thm}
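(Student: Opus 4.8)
The plan is to mirror the proof of Theorem~\ref{thm:classic}, now treating the right factor $\T{3}$ dually to how the left factor $\T{2}$ was handled. The first half of the argument is essentially the $\T{3}=I$ case run with $Q\T{3}$ in place of $Q$: starting from $\|\T{1}-\T{2}Q\T{3}\|_\infty<\gamma$, I would left-multiply by the isometry of Lemma~\ref{lem:helper1} built from $U_i$, use the identities in \eqref{eq:cp2} (namely $U_i^\sim\T{2}=U_o$ and $(I-U_iU_i^\sim)\T{2}=0$) to reach
\[ \left\| \begin{bmatrix} U_i^\sim\T{1}-U_oQ\T{3} \\ Y \end{bmatrix}\right\|_\infty<\gamma, \]
and then apply Lemma~\ref{lem:helper2} to peel this into the pair $\|Y\|_\infty<\gamma$ and $\|(U_i^\sim\T{1}-U_oQ\T{3})Y_o^{-1}\|_\infty<1$. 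The genuinely new ingredients enter here, since the residual now carries $\T{3}Y_o^{-1}$ on the right.

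The crux is to repeat this peeling on the right using row-block (co-)versions of the two helper lemmas. Substituting the co-inner--outer factorization $\T{3}Y_o^{-1}=V_{co}V_{ci}$, the residual becomes $\|U_i^\sim\T{1}Y_o^{-1}-U_oQV_{co}V_{ci}\|_\infty<1$. I would then right-multiply by the co-isometry $[\,V_{ci}^\sim \ \ I-V_{ci}^\sim V_{ci}\,]$ --- the transpose-analogue of Lemma~\ref{lem:helper1}, valid because $V_{ci}V_{ci}^\sim=I$ makes $V_{ci}^\sim V_{ci}$ an orthogonal projection and hence the row co-isometry norm-preserving. Using $V_{ci}V_{ci}^\sim=I$, the first block simplifies to $U_i^\sim\T{1}Y_o^{-1}V_{ci}^\sim-U_oQV_{co}$, while the second block collapses, via the cancellation $V_{ci}V_{ci}^\sim V_{ci}=V_{ci}$, exactly to $Z$. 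Applying the row-block analogue of Lemma~\ref{lem:helper2} (with $\gamma$ replaced by $1$ and co-spectral factor $Z_{co}$ of $I-ZZ^\sim$) then yields the pair $\|Z\|_\infty<1$ and $\|Z_{co}^{-1}(U_i^\sim\T{1}Y_o^{-1}V_{ci}^\sim-U_oQV_{co})\|_\infty<1$, the latter being precisely $\|R-X\|_\infty<1$ for $X=Z_{co}^{-1}U_oQV_{co}$.

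To close, I would invoke Nehari's Theorem (Lemma~\ref{lem:nehari}). By the Remark, $Z_{co}^{-1}U_o$ is right-invertible and $V_{co}$ is left-invertible over $\stab$, so as $Q$ ranges over $\stab$ the image $X=Z_{co}^{-1}U_oQV_{co}$ ranges over all of $\stab$; hence the existence of a stable $X$ with $\|R-X\|_\infty<1$ is equivalent to $\dist{R}{\stab}<1$. Chaining the three equivalences gives statement~1 exactly as in Theorem~\ref{thm:classic}: one direction fixes $\gamma$ slightly above $\alpha$ and runs the reductions forward to deduce $\|Y\|_\infty<\gamma$, $\|Z\|_\infty<1$, and $\dist{R}{\stab}<1$, whence $\gamma_{\inf}\le\gamma$; the reverse direction fixes $\gamma$ slightly above $\gamma_{\inf}$ and reverses every step, each lemma being used as an ``if and only if.'' Statement~2 is then immediate, since the forward chain shows any $Q,X\in\stab$ with $X=Z_{co}^{-1}U_oQV_{co}$ and $\|R-X\|_\infty\le1$ certify $\|\T{1}-\T{2}Q\T{3}\|_\infty\le\gamma$.

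The main obstacle I anticipate is establishing and correctly orienting the dual lemmas: Lemmas~\ref{lem:helper1} and~\ref{lem:helper2} are stated for column stacks and left-multiplication, whereas handling $\T{3}$ requires their row-stack, right-multiplication counterparts. These follow by conjugation ($(\cdot)^\sim$), swapping the roles of inner/co-inner and spectral/co-spectral factors, but I would need to check that co-innerness $V_{ci}V_{ci}^\sim=I$ delivers the projection identities making the row co-isometry norm-preserving, and that the assumed hypothesis $\|Z\|_\infty<1$ is exactly what guarantees existence of the co-spectral factor $Z_{co}$. The only truly new computation is the block simplification after right-multiplication by $[\,V_{ci}^\sim\ \ I-V_{ci}^\sim V_{ci}\,]$, which must reproduce the defined $Z$ and $R$ on the nose; getting these cancellations right is precisely what explains why $Z$ and $R$ are defined as they are.
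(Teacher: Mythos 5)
Your proposal is correct and takes exactly the approach the paper intends: the paper's own proof of Theorem~\ref{thm:classic2} is omitted (stated only as ``analogous'' to Theorem~\ref{thm:classic}), and your expansion --- left peeling via Lemmas~\ref{lem:helper1} and~\ref{lem:helper2}, then the dual right peeling with the co-isometry built from $V_{ci}$, whose block cancellations reproduce $Z$ and $R$ on the nose, then the Nehari step using invertibility of $U_o$, $V_{co}$, $Z_{co}$ over $\stab$ --- is precisely that analogy carried out in full. One small correction: the dual lemmas should be obtained by transposition $G \mapsto G^\top$ rather than conjugation $(\cdot)^\sim$, since transposition preserves membership in $\stab$ and hence delivers \emph{bi-stable} co-spectral/co-outer factors (conjugation would hand you anti-stable ones), which is exactly the convention the paper's Appendix adopts for all ``co-'' factorizations.
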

\begin{proof}
Analogous from that of Theorem \ref{thm:classic}, and therefore omitted.
\end{proof}

Similarly, we may outline a general high level algorithm for computing a controller using Theorem \ref{thm:classic2}:

\begin{enumerate}
\item Compute $Y$ and $\|Y\|_\infty$.
\item Select a trial value $\gamma > \|Y\|_\infty$.
\item Compute $Z$ and $\|Z\|_\infty$.
\item If $\|Z\|_\infty <1$, continue; if not, increase $\gamma$ and return to step 3.
\item Compute $R$ and $\|\Gamma_R\|$.  Then $\|\Gamma_R\|<1$ if and only if $\alpha < \gamma$, so increase or decrease $\gamma$ accordingly, and return to step 3 until a sufficiently accurate upper bound for $\alpha$ is obtained.
\item Find a matrix $X\in\stab$ such that $\|R-X\|_\infty \leq 1$.
\item Solve $X=Z_{co}^{-1}U_oQV_{co}$ for a $Q \in \stab$ satisfying $\|\T{1}-\T{2}Q\T{3}\|_\infty \leq \gamma$.
\end{enumerate}

\section{Distributed $\mathcal{H}_\infty$ Control Subject to Delays}
\label{sec:new}
As in \cite{LD14}, we exploit the fact that the communication graph is strongly connected to decompose $Q$ into a local distributed FIR filter $V\in\mathcal{Y}$ and a global, but delayed, IIR component $\Delta_N D\in \frac{1}{z^N}\stab$, where in particular, $D\in \stab$ is unconstrained:

\begin{equation}
Q = V + \Delta D \in \mathcal{S}, \ \text{with } V \in \mathcal{Y}, \ D \in \stab
\label{eq:Qdecomp}
\end{equation}

\subsection{$\T{3}=I$ Case}
\label{sec:special_dist}
We begin with a solution to the $\T{3}=I$ case to simplify the exposition, as the general case, much as in the centralized problem, follows from an analogous argument.

Let
\begin{itemize}
\item $\Th{1}(V):=\T{1}-\T{2}V$, 
\item $\Th{2}:=\T{2}\Delta_N$,
\item $\hat{U}_i:= U_i \Delta_N$, $\hat{U}_o=U_o \in \stab$ be inner and outer, respectively, such that $\Th{2}=\hat{U}_i\hat{U}_o$, and $\hat{U}_o^{-1}  \in \stab$.
\item $\hat{R}(V):=\Delta_N^\sim R-\hat{U}_o(\Delta_N^\sim V)Y_o^{-1} $,
\end{itemize}
with $Y_o^{-1}$ and $R$ defined as in Section \ref{sec:identity}.  We then have that

\begin{thm}
Let $\alpha := \inf \{ \|\Th{1}-\Th{2}D\|_\infty \ : \ D \in \stab, \ V \in \mathcal{Y} \}$.

Then 
\begin{enumerate}
\item $\alpha = \inf\{ \gamma \ : \ \|Y\|_\infty < \gamma, \ \dist{\hat{R}}{\stab}<1 \}$, and
\item For $\gamma > \alpha$ and $D,\,X\in \stab$ such that
\begin{itemize}
\item $\|\hat{R}-X\|_\infty\leq 1$, and
\item $X = \hat{U}_oDY_o^{-1}$,
\end{itemize}
 we have that $\|\Th{1}-\Th{2}D\|_\infty \leq \gamma$.
 \end{enumerate}
\label{thm:new}
\end{thm}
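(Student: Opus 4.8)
The plan is to recognize that, under the decomposition $Q=V+\Delta_N D$, the distributed problem collapses, for each fixed $V$, onto the centralized model-matching problem solved by Theorem~\ref{thm:classic}. Since $\T{1}-\T{2}Q=(\T{1}-\T{2}V)-\T{2}\Delta_N D=\Th{1}(V)-\Th{2}D$, we have
\[
\alpha=\inf_{V\in\mathcal{Y}}\ \inf_{D\in\stab}\ \|\Th{1}(V)-\Th{2}D\|_\infty ,
\]
and the inner minimization is exactly the $\T{3}=I$ problem of Section~\ref{sec:identity} with data $(\Th{1}(V),\Th{2})$ in place of $(\T{1},\T{2})$. I would therefore instantiate Theorem~\ref{thm:classic} on this data, verify that the auxiliary transfer matrices it produces coincide with the hatted quantities defined above, and finally take the infimum over $V$.

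The crux is a sequence of algebraic verifications, all of which exploit that $\Delta_N=z^{-N}I$ is a scalar all-pass factor: it commutes with every transfer matrix and satisfies $\Delta_N^\sim\Delta_N=\Delta_N\Delta_N^\sim=I$. First, $\Th{2}=\T{2}\Delta_N=U_iU_o\Delta_N=(U_i\Delta_N)U_o=\hat{U}_i\hat{U}_o$, and $\hat{U}_i^\sim\hat{U}_i=\Delta_N^\sim U_i^\sim U_i\Delta_N=I$, so this is a genuine inner-outer factorization with $\hat{U}_o^{-1}=U_o^{-1}\in\stab$. Second, $\hat{U}_i\hat{U}_i^\sim=U_iU_i^\sim$, and since $U_i^\sim U_i=I$ forces $(I-U_iU_i^\sim)\T{2}V=(I-U_iU_i^\sim)U_iU_oV=0$, the projected residual for this data is
\[
(I-\hat{U}_i\hat{U}_i^\sim)\Th{1}(V)=(I-U_iU_i^\sim)(\T{1}-\T{2}V)=(I-U_iU_i^\sim)\T{1}=Y .
\]
This is the decisive observation: the ``$Y$'' associated with $(\Th{1}(V),\Th{2})$ is the original $Y$, \emph{independent of $V$}, so the condition $\|Y\|_\infty<\gamma$ and the bi-stable factor $Y_o$ are unchanged. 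Third, expanding $\hat{U}_i^\sim\Th{1}(V)Y_o^{-1}=\Delta_N^\sim U_i^\sim(\T{1}-\T{2}V)Y_o^{-1}$ and using $U_i^\sim\T{2}=U_o$ together with $R=U_i^\sim\T{1}Y_o^{-1}$ produces exactly $\hat{R}(V)=\Delta_N^\sim R-\hat{U}_o(\Delta_N^\sim V)Y_o^{-1}\in\RLinf$.

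With these identifications, Theorem~\ref{thm:classic}(1) applied to $(\Th{1}(V),\Th{2})$ gives, for each fixed $V$,
\[
\inf_{D\in\stab}\|\Th{1}(V)-\Th{2}D\|_\infty=\inf\{\gamma:\|Y\|_\infty<\gamma,\ \dist{\hat{R}(V)}{\stab}<1\}.
\]
Taking the infimum over $V\in\mathcal{Y}$ and interchanging the two infima — the feasible set of $\gamma$ for the distributed problem is the union over $V$ of the per-$V$ feasible sets, whose infimum is the infimum of the per-$V$ infima — yields statement~(1). Statement~(2) is then immediate: given $\gamma>\alpha$ and $D,X\in\stab$ with $X=\hat{U}_oDY_o^{-1}$ and $\|\hat{R}(V)-X\|_\infty\le1$ for the relevant $V$, Theorem~\ref{thm:classic}(2) for the data $(\Th{1}(V),\Th{2})$ directly gives $\|\Th{1}(V)-\Th{2}D\|_\infty\le\gamma$.

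The main obstacle is not the interchange of infima, which is routine once one observes that $Y$ is $V$-free, but rather confirming carefully that every hatted object is the honest Theorem~\ref{thm:classic} datum for $(\Th{1}(V),\Th{2})$: that $\hat{U}_i$ is inner, that the residual collapses to the $V$-independent $Y$, and that $\hat{R}(V)$ stays in $\RLinf$ so that Nehari's theorem (Lemma~\ref{lem:nehari}) applies uniformly in $V$. Each of these rests on the scalar, commuting, all-pass nature of $\Delta_N$, so the only genuine risk is an oversight in those cancellations.
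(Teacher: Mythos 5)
Your proposal is correct and follows essentially the same route as the paper: your decisive verifications are exactly the paper's Lemma~\ref{lem:new2} (the $V$-independence of $\hat{Y}(V)=Y$, hence of $Y_o$) and equation~\eqref{eq:np4} (the identification $\hat{U}_i^\sim\Th{1}Y_o^{-1}=\hat{R}(V)$). The only presentational difference is that you invoke Theorem~\ref{thm:classic} as a black box on the data $(\Th{1}(V),\Th{2})$ and then interchange the infima over $V$ and $\gamma$, whereas the paper re-runs the proof of Theorem~\ref{thm:classic} inline with the hatted quantities.
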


\begin{rem}
Although this seems to be a nearly identical restatement of Theorem \ref{thm:classic}, this in fact not true, as the factorization matrix $Y_o$ is \emph{identical} to the centralized case, and \emph{independent} of $V$.
\end{rem}

Before proving this result, we will need the following lemma:

\begin{lem}
For $\hat{Y}(V):=(I-\hat{U}_i\hat{U}_i^\sim)\Th{1}(V)$, we have that $\hat{Y}(V)=Y$, where $Y$ is as defined in Section \ref{sec:identity}.
\label{lem:new2}
\end{lem}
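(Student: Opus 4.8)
The plan is to show that the delay operator $\Delta_N$ is all-pass, so that flanking $U_i$ by it leaves the associated inner projection unchanged, and then to invoke the inner relation $U_i^\sim U_i = I$ to annihilate the part of $\hat{Y}(V)$ that depends on $V$. In other words, the $V$-dependence should cancel because $\T{2}V$ lies entirely in the range of $U_i$, which the projector $I-\hat{U}_i\hat{U}_i^\sim$ kills.

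First I would substitute $\hat{U}_i = U_i\Delta_N$ into the projector. Using $\hat{U}_i^\sim = \Delta_N^\sim U_i^\sim$, this gives $\hat{U}_i\hat{U}_i^\sim = U_i\Delta_N\Delta_N^\sim U_i^\sim$. The key observation is that $\Delta_N$ is a scalar all-pass factor: since $\Delta_N = z^{-N}I$ we have $\Delta_N^\sim = z^N I$, and hence $\Delta_N\Delta_N^\sim = I$. Consequently $\hat{U}_i\hat{U}_i^\sim = U_iU_i^\sim$, so the projector onto the orthogonal complement of the range of $\hat{U}_i$ coincides exactly with that of $U_i$:
\[ I - \hat{U}_i\hat{U}_i^\sim = I - U_iU_i^\sim. \]

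With this identity in hand I would expand, using $\Th{1}(V)=\T{1}-\T{2}V$,
\[ \hat{Y}(V) = (I - U_iU_i^\sim)\T{1} - (I - U_iU_i^\sim)\T{2}V. \]
The first summand is precisely $Y$ by its definition in Section \ref{sec:identity}. For the second summand I would use the inner-outer factorization $\T{2}=U_iU_o$ together with $U_i^\sim U_i = I$ to obtain $(I - U_iU_i^\sim)\T{2} = U_iU_o - U_i(U_i^\sim U_i)U_o = U_iU_o - U_iU_o = 0$. Hence the $V$-dependent term vanishes identically and $\hat{Y}(V) = Y$ for every $V \in \mathcal{Y}$.

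I do not expect any genuine obstacle: the argument is purely algebraic and rests on two structural facts, the all-pass property $\Delta_N\Delta_N^\sim = I$ and the inner relation $U_i^\sim U_i = I$. The only point meriting a little care is the conjugation on $\Delta_N$ and verifying $\Delta_N\Delta_N^\sim = I$ rather than merely $\Delta_N^\sim\Delta_N = I$; because $\Delta_N$ is a scalar multiple of the identity the two coincide, but I would state it explicitly so that the cancellation $\hat{U}_i\hat{U}_i^\sim = U_iU_i^\sim$ is unambiguous. The conceptual payoff, as the following remark records, is that $Y$ — and therefore the spectral factor $Y_o$ — is identical to the centralized case and independent of $V$.
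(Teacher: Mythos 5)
Your proof is correct and follows essentially the same route as the paper's: both arguments rest on the all-pass identity $\Delta_N\Delta_N^\sim = I$ (giving $\hat{U}_i\hat{U}_i^\sim = U_iU_i^\sim$) and on the projector $I-U_iU_i^\sim$ annihilating the range of $U_i$, so that the $V$-dependent term $\T{2}V = U_iU_oV$ vanishes. The paper merely orders the two cancellations differently (killing $(I-\hat{U}_i\hat{U}_i^\sim)\T{2}\Delta_N$ first, then identifying the remainder with $Y$), which is an immaterial difference.
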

\begin{proof}
Noting that 
\[(I-\hat{U}_i\hat{U}_i^\sim)(\T{2}\Delta_N) =(I-\hat{U}_i\hat{U}_i^\sim)\hat{\T{2}} = 0,\] it follows immediately that 
\[ \begin{array}{rcl}
\hat{Y}(V)&=& (I-\hat{U}_i\hat{U}_i^\sim)\hat{\T{1}}\\
&=&(I-\hat{U}_i\hat{U}_i^\sim){\T{1}}-(I-\hat{U}_i\hat{U}_i^\sim)(\T{2}\Delta_N) \Delta_N^\sim V \\
&=& (I-\hat{U}_i\hat{U}_i^\sim){\T{1}}
\end{array}
\]. 

Finally, noting that $\hat{U}_i = U_i \Delta_N$, we obtain that $\hat{Y} = Y$.
\end{proof}

We may now prove Theorem \ref{thm:new}.

\begin{proof}

1) We proceed as in the proof of Theorem \ref{thm:classic}, and premultiply by
\begin{equation}
\begin{bmatrix} \hat U_i^\sim \\ (I - \hat U_i \hat U_i^\sim) \end{bmatrix},
\end{equation}
and apply Lemma \ref{lem:helper2} to obtain the equivalence between
$\|\Th{1}-\Th{2}D\|_\infty \leq \gamma$ and 
\begin{equation}
 \left\| \begin{bmatrix} (\hat U_i^\sim\Th{1}-\hat U_oD) \\  \hat{Y}(V) \end{bmatrix}\right\|_\infty < \gamma.
 \label{eq:np1}
\end{equation}

By Lemma \ref{lem:helper2} and Lemma \ref{lem:new2}, \eqref{eq:np1} is equivalent to
\begin{equation}
\|Y\|_\infty < \gamma
\label{eq:np2}
\end{equation}
and
\begin{equation}
\|\hat U_i^\sim\Th{1} Y_o^{-1}-\hat U_oDY_o^{-1}\|_\infty <1.
\label{eq:np3}
\end{equation}
Noting that 
\begin{equation}
\begin{array}{rcl}
\hat U_i^\sim\Th{1} Y_o^{-1} &=& 
\hat U_i^\sim \T{1} Y_o^{-1} - \hat U_i^\sim (\T{2}\Delta_N)\Delta_N^\sim V Y_o^{-1} \\ &=&
 \Delta_N^\sim R - \hat U_o\Delta_N^\sim V Y_o^{-1} \\
&=& \hat R(V)
\end{array}
\label{eq:np4}
\end{equation}
this is then equivalent to
\begin{equation}
\| \hat R(V) -\hat U_oDY_o^{-1}\|_\infty <1,
\end{equation}
which by the arguments of the proof of Theorem \ref{thm:classic}, is equivalent to $\|\Gamma_{\hat{R}(V)}\|<1$.

The rest of the proof proceeds as that of Theorem \ref{thm:classic}. 
\end{proof}

Thus, for a fixed $\gamma$, we have reduced the problem to a feasibility test: does there exist a FIR filter $V\in\mathcal{Y}$ such that $\|\Gamma_{\hat{R}(V)}\|<1$.  As per identity \eqref{eq:hankels}, this is equivalent to $\|\tilde{\Gamma}_{\hat R_1}\|<1$, with $(\hat R_1, \hat R_2)$ an anti-stable/stable decomposition of $\hat{R}$. 

With this in mind, let $R_1$ and $R_2$ be an anti-stable/stable decomposition of $\Delta_N^\sim R$.  Now, define $G(V) \in \stab$ as 
\begin{equation}
\begin{array}{rcl}
G(V)&:=&\hat U_o V Y_o^{-1}  \\ &=& \sum_{i=0}^\infty \frac{1}{z^i}G_i(V).
\end{array}
\label{eq:GV}
\end{equation}
 where the terms $G_i(V)$ are the impulse response elements of $G$.  It is easily verified that these terms are \emph{affine} in $\{V_i\}$, the impulse response elements of $V$ (i.e. $V=\sum_{i=0}^{N-1} \frac{1}{z^i}V_i$).  Note that $G(V) \in \stab$ follows from $U_o,\, V, \, Y_o^{-1} \in \stab$.  As such, let 
\[
G(V) :=  \statespace{A_G}{B_G}{C_G}{D_G}\]
 be a minimal stable realization of $G$.

 We then have that
 \begin{equation}
 \begin{array}{rcl}
 \hat U_o\Delta_N^\sim V Y_o^{-1} &=& \Delta_N^\sim G \\
 &=& z^N\sum_{i=0}^\infty \frac{1}{z^i}G_i(V) \\
 &=& \sum_{k=1}^{N} z^k G_{N-k}(V) + \sum_{j=0}^\infty \frac{1}{z^j}G_{j+N} \\
 &=:& q(V)^\sim + G_N(V).
 \end{array}
 \end{equation}
 with $q(V) = \sum_{k=1}^{N} \frac{1}{z^k} G^\top_{N-k}(V)  \in \stab$ and strictly proper.
 
 Also note that $G_N$ has the following state space representation
 \begin{equation}
 \begin{array}{rcl}
 G_N(V) &=& \statespace{A_G}{B_G}{C_G A_G^N}{C_G A_G^{N-1} B_G}\\ 
 &=& \statespace{A_G}{A_G^N B_G}{C_G }{C_G A_G^{N-1} B_G},
 \end{array}
 \end{equation}
 and is therefore also clearly in $\stab$.

The following lemma is an immediate consequence of the previous discussion.

\begin{lem}
Let $\hat R(V)$ be as defined.  Then an anti-stable/stable decomposition of $\hat{R}(V)$ is given by
\begin{equation}
\begin{array}{rcl}
\hat{R}_1 = R_1 - q(V) \\
\hat{R}_2 = R_2 - G_N(V)
\end{array}
\label{eq:antistabdecomp}
\end{equation}
\end{lem}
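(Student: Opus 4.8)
The plan is to substitute the two decompositions already in hand directly into the definition of $\hat R(V)$ and then read off the anti-stable and stable parts. Recall that $\hat R(V) = \Delta_N^\sim R - \hat U_o\Delta_N^\sim V Y_o^{-1}$. The first term is handled by hypothesis: by construction $(R_1,R_2)$ is an anti-stable/stable decomposition of $\Delta_N^\sim R$, so $\Delta_N^\sim R = R_1^\sim + R_2$ with $R_1,R_2\in\stab$ and $R_1$ strictly proper. The second term was computed in the display immediately preceding the lemma, namely $\hat U_o\Delta_N^\sim V Y_o^{-1} = q(V)^\sim + G_N(V)$, where $q(V)\in\stab$ is strictly proper and $G_N(V)\in\stab$.

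Combining these, and using that the conjugation $(\cdot)^\sim$ is linear across a real combination, I would write
\[ \hat R(V) = \left(R_1^\sim + R_2\right) - \left(q(V)^\sim + G_N(V)\right) = \left(R_1 - q(V)\right)^\sim + \left(R_2 - G_N(V)\right). \]
It then remains only to check that the exhibited pair is a legitimate anti-stable/stable decomposition in the sense of Section \ref{sec:notation}. For the anti-stable part, $R_1 - q(V)\in\stab$ because both summands lie in $\stab$, and it is strictly proper because $R_1$ and $q(V)$ are each strictly proper. For the stable part, $R_2 - G_N(V)\in\stab$ since both $R_2$ and $G_N(V)$ belong to $\stab$. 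This is exactly \eqref{eq:antistabdecomp}.

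There is no substantive obstacle here; the statement is essentially bookkeeping layered on top of the factorization $\hat U_o\Delta_N^\sim V Y_o^{-1} = q(V)^\sim + G_N(V)$ derived above. The only points requiring care are (i) that conjugation distributes across the combination so that $R_1^\sim - q(V)^\sim = (R_1 - q(V))^\sim$, which holds because the coefficient is real, and (ii) that $q(V)$ is genuinely strictly proper — this is precisely why the sum defining $q(V)$ begins at $k=1$ rather than $k=0$, so that it carries no constant term and $q(V)^\sim$ consists only of strictly positive powers of $z$, i.e.\ is purely anti-stable. Since the anti-stable/stable split of an element of $\RLinf$ is unique, the pair displayed above is \emph{the} decomposition of $\hat R(V)$, which completes the argument.
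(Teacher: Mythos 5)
Your proposal is correct and follows essentially the same route as the paper, which presents the lemma as an immediate consequence of the preceding computation $\hat U_o\Delta_N^\sim V Y_o^{-1} = q(V)^\sim + G_N(V)$ together with the definition of $(R_1,R_2)$ as the anti-stable/stable decomposition of $\Delta_N^\sim R$. Your added checks (strict properness of $q(V)$, membership in $\stab$, and uniqueness of the split) are exactly the bookkeeping the paper leaves implicit.
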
 

From our previous discussion, we have thus reduced the problem to finding an FIR filter $V$ such that $\|\tilde{\Gamma}_{\hat{R}_1}\|<1$, for $\hat{R}_1$ given as in \eqref{eq:antistabdecomp}.  

We begin by deriving a state space representation for $\hat{R}_1$, and then use this representation to formulate the Hankel norm bound test as a linear matrix inequality (LMI).

First note that $q(V)$ is simply a strictly causal FIR filter, and thus has a state space representation given by
\begin{equation}
q(V) = \statespace{A_q}{B_q}{C_q(V)}{0},
\end{equation}
where $A_q$ is the down-shift operator (i.e. a block matrix with appropriately dimensioned Identity matrices along the first sub block diagonal, and zeros elsewhere), $B_q = [I,\,0\,\dots,\,0]^\top$, and $C_q(V)=[G_{N-1}(V)^\top,\dots,G_0(V)^\top]$.  Note that only $C_q(V)$ is a function of our design variable $V$.

Letting the strictly proper $R_1\in\stab$ have a minimal stable realization
\begin{equation}
R_1 = \statespace{A_r}{B_r}{C_r}{0}
\end{equation}
we then have the following realization for $\hat{R}_1\in\stab$:

\begin{equation}
\hat{R}_1 = \statespace{\twobytwo{A_r}{0}{0}{A_q}}{\begin{matrix} B_r \\ B_q \end{matrix}}{\begin{matrix} C_r & -C_q(V) \end{matrix}}{0} =: \statespace{A_R}{B_R}{C_R(V)}{0}.
\label{eq:realization}
\end{equation}

We emphasize again that our design variable $V$ appears only in $C_R(V)$. 

We now remind the reader of the variational formulation for the Hankel norm of a strictly proper transfer matrix $F \in \stab$. 

\begin{prop}
For a system \[F=\statespace{A}{B}{C}{0} \in\stab,\] we have that $\|\tilde{\Gamma}_F\|<1$ if and only if there exist matrices $P,\, Q\geq 0$ and scalar  $\lambda \geq 0$ such that
\begin{equation}
\begin{array}{rcl}
\begin{bmatrix} A^\top Q A - Q & C^\top \\
C & -\lambda I \end{bmatrix} &\leq& 0 \\
\\
\begin{bmatrix} 
-P & PB & PA \\
B^\top P & - I & 0 \\
A^\top P & 0 & - P
\end{bmatrix} & \leq & 0 \\
P - Q &\geq& 0 \\
\lambda < 1
\end{array}
\label{eq:hankel}
\end{equation}
\label{prop:hankel_lmi}
\end{prop}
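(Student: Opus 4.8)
The plan is to recast the three matrix inequalities as Lyapunov inequalities via Schur complements, and then tie them to the controllability and observability Gramians of the realization, exploiting the classical fact that the Hankel norm of a strictly proper stable system is the square root of the spectral radius of the product of these Gramians. Concretely, I would first recall that for a minimal stable realization $F=\statespace{A}{B}{C}{0}\in\stab$ the adjoint Hankel operator satisfies $\|\tilde{\Gamma}_F\| = \sqrt{\rho(W_c W_o)}$, where $\rho(\cdot)$ is the spectral radius and $W_c,\,W_o>0$ are the controllability and observability Gramians solving the discrete Lyapunov equations $A W_c A^\top - W_c + BB^\top = 0$ and $A^\top W_o A - W_o + C^\top C = 0$; minimality guarantees both Gramians are positive definite, which I will use freely.

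Next, for $\lambda>0$, taking the Schur complement of the first inequality with respect to its $-\lambda I$ block shows it is equivalent to $A^\top Q A - Q + \frac{1}{\lambda} C^\top C \leq 0$, and a standard iteration using Schur stability of $A$ forces any such $Q\geq 0$ to satisfy $Q \geq \frac{1}{\lambda} W_o$. Taking the Schur complement of the second inequality with respect to its block-diagonal $\mathrm{diag}(-I,-P)$ block, and then a congruence by $P^{-1}$, shows it is equivalent to $A P^{-1} A^\top - P^{-1} + BB^\top \leq 0$, which by the same monotonicity argument forces $P^{-1} \geq W_c$, i.e. $P \leq W_c^{-1}$. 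The third inequality is simply $P \geq Q$.

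Chaining these together with $\lambda<1$ gives $\frac{1}{\lambda} W_o \leq Q \leq P \leq W_c^{-1}$, hence $\frac{1}{\lambda} W_o \leq W_c^{-1}$; a congruence by $W_c^{1/2}$ yields $W_c^{1/2} W_o W_c^{1/2} \leq \lambda I < I$, and since this matrix is similar to $W_c W_o$ we conclude $\rho(W_c W_o)<1$, that is $\|\tilde{\Gamma}_F\|<1$. For the converse, given $\|\tilde{\Gamma}_F\|<1$ I would pick $\lambda\in(\rho(W_c W_o),1)$ and set $Q=\frac{1}{\lambda}W_o$ and $P=W_c^{-1}$; these satisfy the first two inequalities with equality, are positive semidefinite, and obey $P\geq Q$ precisely because $\lambda \geq \rho(W_c W_o)$. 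The main obstacle is bookkeeping rather than conceptual: executing the $3\times 3$ Schur complement for the second inequality correctly (note that $P$ appears in both the pivot and the $(3,3)$ block, so $P^{-1}$ surfaces after congruence) and justifying that solutions of the Lyapunov inequalities dominate the corresponding Gramians; the degenerate case $\lambda=0$ needs only a one-line remark, since it forces $C=0$ and hence a zero Hankel norm.
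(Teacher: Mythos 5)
Your Gramian/Schur-complement route is sound in its core computations, and it is genuinely different from what the paper does: the paper gives no self-contained proof at all, merely asserting that \eqref{eq:hankel} is the discrete-time analog of the variational formulation in Section 6.3.1 of \cite{B94}, whereas you derive the equivalence directly from $\|\tilde{\Gamma}_F\|^2=\rho(W_cW_o)$. For a \emph{minimal} realization your argument is correct in both directions: the first inequality forces $Q\geq\frac{1}{\lambda}W_o$, the second (after the pivot on $\mathrm{diag}(-I,-P)$ and congruence by $P^{-1}$, which you execute correctly) forces $P^{-1}\geq W_c$, the coupling $P\geq Q$ with $\lambda<1$ yields $\rho(W_cW_o)\leq\lambda<1$, and the certificate $Q=\frac{1}{\lambda}W_o$, $P=W_c^{-1}$, $\lambda\in(\rho(W_cW_o),1)$ handles the converse; your one-line treatment of $\lambda=0$ is also right.

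The genuine gap is the blanket minimality assumption, which is absent from the statement of Proposition \ref{prop:hankel_lmi} and is violated exactly where the paper uses it: the feasibility test \eqref{eq:hankel_specialized} substitutes the realization \eqref{eq:realization}, whose observability depends on the decision variable $V$ (for $V=0$ one has $C_q(V)=0$, so every shift-register state is unobservable, and cancellations between $R_1$ and $q(V)$ can occur for other $V$ too). Both directions of your proof lean on positive-definite Gramians in an essential way: forward, you need $P>0$ (obtained from $P\geq Q\geq\frac{1}{\lambda}W_o>0$, i.e.\ from observability) before you may invert $P$; backward, $P=W_c^{-1}$ simply does not exist when the realization is uncontrollable. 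Both failures are repairable, and the repairs are worth recording. Forward: avoid inversion entirely by reading the second inequality as the dissipation inequality $(Ax+Bu)^\top P(Ax+Bu)\leq x^\top Px+u^\top u$ (maximize the quadratic form over its first block of variables, which is legitimate since $P\geq 0$), and the first as $\sum_{k\geq0}\|CA^kx\|^2\leq\lambda\,x^\top Qx$; chaining through $Q\leq P$ bounds the future output energy of any state reached by a unit-energy past input by $\lambda$, so $\|\tilde{\Gamma}_F\|\leq\sqrt{\lambda}<1$ with no rank assumptions. Backward: note first that the identity $\|\tilde{\Gamma}_F\|^2=\rho(W_cW_o)$ itself survives non-minimality (the nonzero spectra of $W_cW_o$ and $\tilde{\Gamma}_F^*\tilde{\Gamma}_F$ coincide), then take $P=(W_c+\epsilon S)^{-1}$ with $S>0$ solving $ASA^\top-S+I=0$, so that $AP^{-1}A^\top-P^{-1}+BB^\top=-\epsilon I\leq0$, and use continuity of the spectral radius to keep $\rho\left((W_c+\epsilon S)W_o\right)<\lambda<1$ for small $\epsilon$. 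With these two patches your proof establishes the proposition in the generality the paper actually needs.
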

\begin{proof}
This is the discrete-time analog of the variational formulation found in Section 6.3.1 of \cite{B94}.
\end{proof}

Substituting our realization \eqref{eq:realization} into (\ref{eq:hankel}), we see that this is an LMI in the variables $\{V_i\}_{i=0}^{N-1}$, $P$, $Q$, and $\lambda$, and is feasible if and only if there exists an FIR filter $V \in \mathcal{Y}$ such that $\|\tilde{\Gamma}_{\hat{R}_1}\|<1$.

Thus, a high level outline for computing a distributed controller satisfying an $\Hinf$ norm bound of $\gamma$ in closed loop is
\begin{enumerate}
\item Compute $Y$ and $\|Y\|_\infty$.
\item Select a trial value $\gamma > \|Y\|_\infty$.
\item Construct $\hat{R}_1(V)$ and check if the LMI
\begin{equation}
\begin{array}{rcl}
\begin{bmatrix} A_R^\top Q A_R - Q & C_R(V)^\top \\
C_R(V) & -\lambda I \end{bmatrix} &\leq& 0 \\
\\
\begin{bmatrix} 
-P & PB_R & PA_R \\
B_R^\top P & - I & 0 \\
A_R^\top P & 0 & - P
\end{bmatrix} & \leq & 0 \\
P - Q &\geq& 0 \\
\lambda < 1
\end{array}
\label{eq:hankel_specialized}
\end{equation}
is feasible.  This LMI is feasible if and only if $\|\tilde{\Gamma}_{\hat R_1}\|<1$, which in turn occurs if and only if $\alpha < \gamma$, so increase or decrease $\gamma$ accordingly.  This feasibility test will additionally yield an FIR filter $V$ that satisfies this bound.
\item Find a matrix $X(V)\in\stab$, dependent on $V$, such that $\|\hat R(V)-X(V)\|_\infty \leq 1$ (such a matrix is guaranteed to exist by the same arguments as those used in the centralized case).
\item Solve $X(V)= \hat U_oDY_o^{-1}$ for $D(V) \in \stab$ satisfying $\|\Th{1}-\Th{2}D\|_\infty \leq \gamma$.
\item Set $Q = V + \Delta_N D(V)\in \mathcal{S}\bigcap \stab$
\end{enumerate}

\subsection{General $\T{3}$}

Define the following transfer matrices
\begin{enumerate}
\item $\hat{Z} = \hat{U}_i^\sim \T{1}Y_o^{-1}(I-V_{ci}V_{ci}^\sim),$
\item $\hat{R} := \Delta_N^\sim R - Z_{co}^{-1}\hat{U}_o(\Delta_N^\sim V)V_{co},$
\end{enumerate}
and let $Y_o^{-1}$, $R$, $V_{co}$ and $Z_{co}^{-1}$ be as defined in Section \ref{sec:general}, and $\Th{1}, \ \Th{2}, \ \hat{U}_i$ and $\hat{U}_o$ be as defined in Section \ref{sec:special_dist}.  We note that just as $\hat{Y}(V)$ was independent of $V$, so too would be the analogous $\hat{Z}(V)$ -- as such we simply define $\hat{Z}$ and not $\hat{Z}(V)$.

\begin{rem}
Although initially surprising, the independence of $\hat{Y}(V)$ and $\hat{Z}(V)$ from $V$ is in fact fairly intuitive.  The $\mathcal{L}_\infty$ norms of $Y$ and $\hat{Z}$ correspond to fundamental performance limits as imposed by the plant, and as such should not be affected by rewriting the controller as a sum of two components, rather than as a single element.
\end{rem}

\begin{thm}
Let $\alpha := \inf \{ \|\Th{1}-\Th{2}D\T{3}\|_\infty  : D \in \stab \}$.

Then 
\begin{enumerate}
\item $\alpha = \inf\{ \gamma \ : \ \|Y\|_\infty < \gamma, \ \|Z\|_\infty <1, \  \dist{R}{\stab}<1 \}$, and
\item For $\gamma > \alpha$ and $D,\,X\in \stab$ such that
\begin{itemize}
\item $\|\hat{R}-X\|_\infty\leq 1$, and
\item $X = Z_{co}^{-1}\hat U_oDV_{co}$,
\end{itemize}
 we have that $\|\Th{1}-\Th{2}D\T{3}\|_\infty \leq \gamma$.
 \end{enumerate}
\label{thm:new2}
\end{thm}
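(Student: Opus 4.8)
The plan is to run the left-hand argument from the proof of Theorem~\ref{thm:new} and the right-hand (co-inner/co-spectral) argument behind Theorem~\ref{thm:classic2} simultaneously, since Theorem~\ref{thm:new2} stands to Theorem~\ref{thm:classic2} exactly as Theorem~\ref{thm:new} does to Theorem~\ref{thm:classic}. The only facts about the delay I will need are that $\hat U_i = U_i\Delta_N$ remains inner (so $\hat U_i^\sim \hat U_i = I$ and $\hat U_i\hat U_i^\sim \T{2} = \T{2}$), that $\hat U_i^\sim\Th{2} = \hat U_o$ and $(I-\hat U_i\hat U_i^\sim)\Th{2}=0$, and that $\Delta_N^\sim = z^N I$ is a scalar commuting with every factor. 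Because the delay is scalar, it passes cleanly through both the left inner factor and the right co-inner factor, which is what lets me reuse the centralized $Y_o$, $V_{ci}$, $V_{co}$ and $Z_{co}$ without recomputing them for each trial $V$.

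First I would take $\|\Th{1}-\Th{2}D\T{3}\|_\infty\le\gamma$ and premultiply by $\left[\begin{smallmatrix}\hat U_i^\sim \\ I-\hat U_i\hat U_i^\sim\end{smallmatrix}\right]$, which is norm-preserving by Lemma~\ref{lem:helper1}. Using $\hat U_i^\sim\Th{2}=\hat U_o$ and $(I-\hat U_i\hat U_i^\sim)\Th{2}=0$, the result is a two-row quantity with bottom row $\hat Y(V)=(I-\hat U_i\hat U_i^\sim)\Th{1}$. The $\hat Y$-analog of Lemma~\ref{lem:new2} gives $\hat Y(V)=Y$ independent of $V$: the $V$-dependent part of $\Th{1}$ has $\T{2}$ as a left factor, hence is annihilated by $(I-\hat U_i\hat U_i^\sim)$. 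Lemma~\ref{lem:helper2} then splits off $\|Y\|_\infty<\gamma$ and leaves the contraction $\|\hat U_i^\sim\Th{1}Y_o^{-1}-\hat U_o D\,\T{3}Y_o^{-1}\|_\infty<1$.

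Next I would introduce the co-inner--outer factorization $\T{3}Y_o^{-1}=V_{co}V_{ci}$ and postmultiply the contraction by $\left[\begin{smallmatrix}V_{ci}^\sim & I-V_{ci}^\sim V_{ci}\end{smallmatrix}\right]$, the adjoint (row) version of Lemma~\ref{lem:helper1}, again norm-preserving. Since $V_{ci}V_{ci}^\sim=I$, the first block becomes $\hat U_i^\sim\Th{1}Y_o^{-1}V_{ci}^\sim-\hat U_o D V_{co}$; since $V_{ci}(I-V_{ci}^\sim V_{ci})=0$, the $D$-term drops out of the second block, which collapses to $\hat Z(V):=\hat U_i^\sim\Th{1}Y_o^{-1}(I-V_{ci}^\sim V_{ci})$. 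Here I would establish the $\hat Z$-analog of Lemma~\ref{lem:new2}: writing out $\Th{1}-\Th{2}D\T{3}$ from $\T{1}-\T{2}(V+\Delta_N D)\T{3}$, the $V$-term enters with $\T{3}$ on the right and therefore carries the factor $\T{3}Y_o^{-1}(I-V_{ci}^\sim V_{ci})=V_{co}V_{ci}(I-V_{ci}^\sim V_{ci})=0$, so $\hat Z(V)=\hat Z=Z$ is independent of $V$. The adjoint (co-spectral) form of Lemma~\ref{lem:helper2}, with factor $Z_{co}$ of $I-ZZ^\sim$, then splits off $\|Z\|_\infty<1$ and reduces the bound to $\|Z_{co}^{-1}\hat U_i^\sim\T{1}Y_o^{-1}V_{ci}^\sim-Z_{co}^{-1}\hat U_o D V_{co}\|_\infty<1$.

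Finally, using $\hat U_i^\sim=\Delta_N^\sim U_i^\sim$ and the bookkeeping of~\eqref{eq:np4}, I would identify $Z_{co}^{-1}\hat U_i^\sim\T{1}Y_o^{-1}V_{ci}^\sim=\Delta_N^\sim R$ and fold the surviving $V$-term into $\hat R(V)=\Delta_N^\sim R-Z_{co}^{-1}\hat U_o(\Delta_N^\sim V)V_{co}$, so the condition reads $\|\hat R(V)-Z_{co}^{-1}\hat U_o D V_{co}\|_\infty<1$. Nehari's theorem (Lemma~\ref{lem:nehari}), together with the right-invertibility of $Z_{co}^{-1}\hat U_o=Z_{co}^{-1}U_o$ and the left-invertibility of $V_{co}$ over $\stab$ (the Remark after the factorization list in Section~\ref{sec:general}), converts this into the Hankel-norm condition $\dist{\hat R}{\stab}<1$, which matches the $\T{3}=I$ distributed case of Theorem~\ref{thm:new}; combined with $\|Y\|_\infty<\gamma$ and $\|Z\|_\infty<1$ this proves part~1, and part~2 is read off directly from the last bound and the identification $X=Z_{co}^{-1}\hat U_o D V_{co}$. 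The main obstacle I anticipate is the two-sided bookkeeping: I must check that the scalar delay commutes past both factorizations at once and, crucially, that the two $V$-independence claims hold for complementary reasons — $\hat Y=Y$ because the left factor annihilates $\T{2}$, and $\hat Z=Z$ because the right complement $I-V_{ci}^\sim V_{ci}$ annihilates $\T{3}Y_o^{-1}$ — so that a single centralized set of spectral factors serves every trial $V$.
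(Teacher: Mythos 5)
Your proof is correct and is precisely the argument the paper intends: the paper's own proof of this theorem is omitted as ``analogous to that of Theorem \ref{thm:new},'' and your simultaneous left-inner/right-co-inner reduction --- splitting off $\|Y\|_\infty < \gamma$ via $\hat{U}_i$, then $\|Z\|_\infty < 1$ via $V_{ci}$ and $Z_{co}$, with the two complementary $V$-independence claims, and finishing with Nehari plus the invertibility of $Z_{co}^{-1}\hat{U}_o$ and $V_{co}$ over $\stab$ --- is exactly that analogue. One pedantic note: since $\hat{U}_i^\sim = \Delta_N^\sim U_i^\sim$, what you actually obtain is $\hat{Z} = \Delta_N^\sim Z$ rather than $\hat{Z} = Z$, but because $\Delta_N^\sim$ is a scalar all-pass factor this changes neither $\|\hat{Z}\|_\infty = \|Z\|_\infty$ nor the co-spectral factor $Z_{co}$ of $I - ZZ^\sim$, so your argument goes through unchanged.
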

\begin{proof}
Analogous to that of Theorem \ref{thm:new}, and therefore omitted.
\end{proof}

Just as in the $\T{3}=I$ case, this problem has now been reduced to finding an FIR filter $V\in\mathcal{Y}$ such that $\|\tilde{\Gamma}_{\hat{R}}\|<1$.  The arguments of the preceding section apply nearly verbatim, with the exception of replacing equation \eqref{eq:GV} with
\begin{equation}
 G(V) := Z_{co}^{-1}\hat{U}_o V V_{co}
 \label{eq:GV2}
 \end{equation}
 
Therefore, a high level outline for computing a distributed controller satisfying an $\Hinf$ norm bound of $\gamma$ in closed loop is
\begin{enumerate}
\item Compute $Y$ and $\|Y\|_\infty$.
\item Select a trial value $\gamma > \|Y\|_\infty$.
\item Compute $\hat Z$ and $\|\hat Z\|_\infty$.
\item  If $\|\hat Z\|_\infty <1$, continue; if not, increase $\gamma$ and return to step 3.
\item Construct $\hat{R}_1(V)$, with $G(V)$ defind as in \eqref{eq:GV2}, and check if the LMI \eqref{eq:hankel_specialized} is feasible.  This LMI is feasible if and only if $\|\tilde{\Gamma}_{\hat R_1}\|<1$, which in turn occurs if and only if $\alpha < \gamma$, so increase or decrease $\gamma$ accordingly. This feasibility test will additionally yield an FIR filter $V$ that satisfies this bound.
\item Find a matrix $X(V)\in\stab$, dependent on $V$, such that $\|\hat R(V)-X(V)\|_\infty \leq 1$.
\item Solve $X(V)= Z_{co}^{-1}\hat U_oDV_{co}$ for $D(V) \in \stab$ satisfying $\|\Th{1}-\Th{2}D\T{3}\|_\infty \leq \gamma$.
\item Set $Q = V + \Delta_N D(V) \in \mathcal{S}\bigcap \stab$
\end{enumerate}

For the convenience of the reader, we provide explicit state-space formulae for the factorizations and approximations required to implement this algorithm in the Appendix.

\section{Example}
\label{sec:example}
We consider first the full information problem ($P_{21}=I$) of a three-player chain with communication delay of $\tau_c = 1$ -- the sparsity constraint $\mathcal{Y}$ on the FIR filter is as given in equation \eqref{eq:3chain}.  The dynamics of $P_{11},\, P_{12}$ and $P_{22}$ are given by 
\begin{equation}
\begin{array}{lll}
A = \begin{bmatrix} .5 & .2 & 0 \\
				.2 & .5 & .2 \\
				0 & .2 & .5 \end{bmatrix}, &
B_1 = \left[ I_{3\times 3} \ 0_{3\times 3}\right] & B_2 = I_{3\times 3},  \\
C_1 = \begin{bmatrix} I_{3\times 3}  \\ 0_{3\times 3} \end{bmatrix}, & D_{11} = 0_{6\times 6}, & D_{12} =\begin{bmatrix} 0_{3\times 3}  \\ I_{3\times 3} \end{bmatrix}   \\
C_2 = I_{3\times 3}, &  D_{21} = \left[  0_{3\times 3} \ I_{3\times 3}  \right], & D_{22} = 0_{3\times 3},
\end{array}
\label{eq:params}
\end{equation}
and we set $P_{21}=I$.  Note that this is a suitably modified version of the output feedback problem considered in \cite{LD14}.

We first computed the optimal centralized norm of the system using classical results \cite{ZDG96}, and obtained a centralized closed loop norm of .9772.  We note that this is the theoretical lower bound as given by $\|Y\|_\infty$ from the algorithms we described above.  To verify the consistency of our algorithm, we used our LMI formulation to compute a centralized controller as well.  This was done by allowing the elements of the FIR filter $V_0$ and $V_1$ to be unconstrained, and not suprisingly, we were also able to achieve a closed loop norm of .9772 in this manner.  We then constrained $V$ to lie in the subspace $\mathcal{Y}$ as given by \eqref{eq:3chain}, and surprisingly, we were still able to achieve a closed loop norm of .9772.  This is a significant improvement over the delayed system (i.e.$V_0$ and $V_1$ constrained to be zero), for which we were only able to achieve a closed loop norm of 1.6856.

We then considered the general output-feedback problem, with $P_{21}$ given by the parameters in \eqref{eq:params} as well.  The centralized and LMI computed centralized closed loop norms were both found to be 1.502, with the best distributed norm found to be 1.515.  Once again, we see near identical performance from the centralized and distributed solutions, whereas the delayed controller was only able to achieve a closed loop norm of 2.213.

\section{Conclusion}
\label{sec:conc}
This paper presented an LMI based characterization of the sub-optimal delay-constrained distributed $\mathcal{H}_\infty$ control problem.  By exploiting the strongly connected nature of the communication graph, we were able to reduce the problem to a feasibility test in terms of the Hankel norm of a certain transfer matrix that is a function of the localized FIR component of the controller.  We note that much as in the $\mathcal{H}_2$ case, by reducing the control synthesis problem to one that is convex in the FIR filter, communication delay co-design \cite{M_CDC13_codesign} and augmentation \cite{MD_dual13} methods are applicable .  However, although finite dimensional, this method is based on the ``1984'' approach to $\mathcal{H}_\infty$ control -- as such, the computational burden is quite high, limiting the scalability of the approach.  

Future work will therefore focus on the following three aspects: (1) adapting the parameterization used in \cite{LD14} so as to relax the assumption of a stable plant, (2) formally integrating communication delay co-design methods into the controller synthesis procedure, and most pressingly (3) seeking more direct and computationally scalable means of identifying appropriate FIR filters.

\section{Acknowledgements}
The author would like to thank Andrew Lamperski for the suggestion to look at the ``old-school'' $\mathcal{H}_\infty$ literature, and John C. Doyle for pointers to references on discrete time Nehari problems.  The author would especially like to thank Seungil You for his enthusiastic revisions and unrelenting questioning of the technical content of this manuscript -- these undoubtedly improved the quality and accuracy of this work immensely.
 
\bibliographystyle{/Users/nmatni/Documents/Publications/ACC13_duality/IEEEtran}
\bibliography{hinf,/Users/nmatni/Documents/Publications/biblio/comms,/Users/nmatni/Documents/Publications/biblio/decentralized,/Users/nmatni/Documents/Publications/biblio/matni}

\begin{appendix}
In all of the following, we assume that the conditions needed for the existence of the required stabilizing solution of the corresponding Discrete Algebraic Riccati Equations (DARE) are met -- the reader is referred to \cite{ZDG96} and \cite{H93} for more details.  All ``co-X'' factorizations, where ``X'' may be either inner-outer or bi-stable spectral,  can be obtained by transposing the ``X'' factorization of the transpose system.

\subsection{Inner-Outer Factorizations}

Let 
\[ G := \statespace{A}{B}{C}{D} \in \stab. \]  From \cite{ZDG96}, an inner-outer factorization $G=U_i U_o$ of $G$, with $U_i$ inner and $U_o$ outer, is given by
\begin{equation}
U_i = \statespace{A+BF}{BH^{-1}}{C + DF}{DH^{-1}}
\end{equation}
\begin{equation}
U_o = \statespace{A}{B}{-HF}{H}
\end{equation}
with $H = (D^\top D + B^\top X B)^{\frac{1}{2}}$, and $X$ the stabilizing solution of the following DARE
\begin{equation}
\begin{array}{rcl}
X &=& A^\top X A + C^\top C + A^\top X B F, \\
F &=& - (D^\top D + B^\top X B)^{-1}B^\top X A.
\end{array}
\end{equation}

\subsection{Bi-stable Spectral Factorizations}
Let $Y \in \stab$ be strictly proper, and let
\[ G_Y = \statespace{A_Y}{B_Y}{C_Y}{0} \] be a state-space realization of the strictly proper $\stab$ component of $Y^\sim Y$.  

If $A_Y$ is invertible, then it holds that
\[
\gamma^2 I - Y^\sim Y =  G_Y + G_Y^\sim + D_Y + D_Y^\top
\]
where $D_Y = \frac{1}{2}\left( \gamma^2 I + B_Y^\top A_Y^{-\top}C_Y^\top\right).$

A bi-stable spectral factorization $\gamma^2 I - Y^\sim Y =  M^\sim M$, with $M, \, M^{-1} \in \stab$ is then given by
\begin{equation}
M = \statespace{A_Y}{B_Y}{H^{-1}(C_Y + B_Y^\top X A_Y)}{H}
\end{equation}
with $H=(D_Y + D_Y^\top + B_Y^\top X B_Y)^\frac{1}{2}$, and $X$ the stabilizing solution of the following DARE
\begin{equation}
\begin{array}{rcl}
X &=& A_Y^\top X A_Y + (A_Y^\top X B_Y + C_Y^\top) F, \\
F &=& - (D_Y^\top  + D_Y + B_Y^\top X B_Y)^{-1}(B_Y^\top X A_Y + C_Y).
\end{array}
\end{equation}

This result follows directly from standard results on spectral factors and positive real systems \cite{ZDG96}

\subsection{Stable Approximations}
The following is taken from \cite{H93}.  Let
\[G := \statespace{A}{B}{C}{D} \in \stab \] be a minimal state-space representation, and assume that $\rho = \|\tilde{\Gamma}_G\| < \gamma$.  Let $X$ and $Y$ be the controllability and observability Gramians of $G$, respectively. 

 Let $Q \in \stab$ have the state-space representation
\[ Q := \statespace{A_Q}{B_Q}{C_Q}{D_Q} \] with

\[\begin{array}{rcl}
A_Q &=& A- BC_Q \\
B_Q &=& AXC^\top + BE^\top \\
C_Q &=& (E^\top C + B^\top YA)N \\
D_Q &=& D^\top - E^\top,
\end{array}
\]

where $N=(\gamma^2 I - XY)^{-1}$, and for any unitary matrix $U$,
\begin{multline*}
E = - (I+CNXC^\top)^{-1}CNXA^\top YB \\ + \gamma(I+CNXC^\top)^{-\frac{1}{2}}U(I+B^\top YNB)^{-\frac{1}{2}} .\end{multline*}

Then $\|G-Q^\sim\|_\infty = \gamma$ and $(G-Q^\sim)^\sim (G-Q^\sim) = \gamma^2 I $.
\end{appendix}
\end{document}